\newtheorem{theorem}{Theorem}
\newtheorem{assumption}{Assumption}
\newtheorem{remark}{Remark}
\newtheorem{lemma}{Lemma}
\newtheorem{proposition}{Proposition}
\newtheorem{problem}{Problem}
\newtheorem{proof}{Proof}
\pgfplotsset{compat=1.18}
\crefname{section}{Section}{Sections}
\crefname{theorem}{Theorem}{Theorems}
\crefname{proposition}{Proposition}{Propositions}
\crefname{lemma}{Lemma}{Lemmas}
\crefname{problem}{Problem}{Problems}
\crefname{assumption}{Assumption}{Assumptions}
\crefname{algorithm}{Algorithm}{Algorithms}
\crefname{remark}{Remark}{Remarks}
\crefname{equation}{}{}
\Crefname{equation}{Equation}{Equations}
\crefname{figure}{Fig.}{Figs.}
\crefname{table}{Table}{Tables}
\newcommand{\RR}{\mathbb{R}}
\newcommand{\NN}{\mathbb{N}}
\newcommand{\norm}[1]{\left\Vert{#1}\right\Vert}
\newcommand{\rd}{\mathrm{d}}
\DeclareMathOperator*{\minimize}{minimize}
\DeclareMathOperator*{\argmin}{arg\,min}
\newcommand{\algorithmicbreak}{\textbf{break}}
\newcommand{\BREAK}{\STATE \algorithmicbreak}
\crefname{ALC@unique}{Step}{Steps}
\def\qed{\hfill $\Box$} 
\begin{document}
%
\title{Controllability scores of linear time-varying network systems}
\author{Kota Umezu and Kazuhiro Sato\thanks{K. Umezu and K. Sato are with the Department of Mathematical Informatics, Graduate School of Information Science and Technology, The University of Tokyo, Tokyo 113-8656, Japan, email: krr0814bz@g.ecc.u-tokyo.ac.jp (K. Umezu), kazuhiro@mist.i.u-tokyo.ac.jp (K. Sato) }}
\maketitle
\thispagestyle{empty}
\pagestyle{empty}

\begin{abstract}
For large-scale network systems, network centrality based on control theory plays a crucial role in understanding their properties and controlling them efficiently.
The controllability score is such a centrality index and can give a physically meaningful measure.
It is originally proposed for linear time-invariant (LTI) systems, and we extend it to linear time-varying (LTV) systems in this paper.
Since the controllability score is defined as an optimal solution to some optimization problem, it is not necessarily uniquely determined.
Its uniqueness must be guaranteed for reproducibility and interpretability.
In this paper, we show its uniqueness in almost all cases, which guarantees its use as a network centrality measure.
We also prove its continuity with respect to the time parameters.
In addition, we propose a data-driven method to compute it.
Finally, we verify the effectiveness of the extension and examine the performance of the data-driven method through numerical experiments.
\end{abstract}

\begin{IEEEkeywords}
controllability score, LTV system, temporal network, data-driven method
\end{IEEEkeywords}

\IEEEpeerreviewmaketitle

\section{Introduction}
\label{sec: introduction}

\subsection{Background}
\label{subsec:background}

Large-scale dynamical systems on networks are ubiquitous across various fields: power grids~\cite{FuchsMorari2013} and multi-agent systems~\cite{FitchLeonard2016} in engineering, brain networks~\cite{Gu2015}, cellular reprogramming~\cite{Ronquist2017}, and ecosystems~\cite{Zhang2020} in natural sciences, and opinion networks~\cite{ChenYong2021} in social sciences.
Although nonengineering network systems are not necessarily controlled artificially, like engineering systems, they alter their dynamics in response to input signals and thus fall within the scope of modern control theory.
For instance, the brain alters its dynamics for task demands~\cite{Gu2015}, and ecosystems shift their dynamics in response to external disturbances~\cite{Zhang2020}.
Therefore, studying these networks from the perspective of modern control theory is crucial for performing efficient control or uncovering system properties, whether for engineering or nonengineering network systems, and has become an active area of research~\cite{Souza2023}.

One approach to analyzing large-scale network systems is to identify key nodes in their dynamics.
Among such approaches, a prominent method is the one proposed in \cite{Liu2011}, which is based on structural controllability~\cite{Lin1974}, a qualitative concept.
The method utilizes graph-theoretic tools and identifies a minimum set of nodes with which structural controllability is ensured when signals are applied.
The selected nodes are considered key nodes for control.
However, structural controllability is not necessarily a physically meaningful concept since structurally controllable systems may require a vast amount of energy for control~\cite{Gang2015}, and thus controlling them is sometimes unrealistic.
Consequently, as pointed out in \cite{Pasqualetti2014}, quantitative approaches are more favorable.

One of the prominent quantitative approaches is discussed in \cite{Summers2016}.
The method also identifies a set of key nodes by solving a combinatorial optimization problem based on a quantitative controllability metric.
Alternatively, assessing network centrality quantitatively is also a frequently utilized way~\cite{Summers2016,Pasqualetti2014}.
For instance, ranking by centrality measure based on quantitative controllability is applied to brain networks~\cite{Gu2015}.
The advantage of network centrality is that it provides a relative measure of importance rather than a binary assessment of whether being a key node or not.
The controllability score is such a centrality index, and some numerical experiments show that it provides a more reasonable measure than other existing indices.

The controllability score was originally developed for linear time-invariant (LTI) systems~\cite{SatoTerasaki2024}:
\begin{equation}\label{eq:lti_base}
    \dot{x}(t)=Ax(t).
\end{equation}
As explained in more detail in \cref{subsec:settings}, the optimal solution to a specific optimization problem can be interpreted as the importance of each node in the network; the controllability score is thus defined as the optimal solution.
If the optimal solution is not unique, however, different researchers employing the controllability score for the same system may reach inconsistent conclusions, thereby raising concerns about reproducibility.
Hence, the practical utility of the controllability score as a network centrality requires that the optimal solution be unique, and the condition under which the controllability score for LTI systems is unique has been investigated in \cite{SatoTerasaki2024,SatoKawamura2025}.

However, there is a problem that LTI systems cannot capture dynamics on a network whose structure varies over time, such as temporal networks~\cite{HolmeSaramaki2012}, unlike linear time-varying (LTV) systems:
\begin{equation}\label{eq:ltv_base}
    \dot{x}(t)=A(t)x(t).
\end{equation}
Recent research reports that LTV systems on temporal networks and LTI systems exhibit qualitatively and quantitatively different properties, such as the minimum energy for control~\cite{Li2017} and driver nodes~\cite{Srighakollapu2022}.
Therefore, it was necessary to extend the controllability score so that it can be applied to LTV systems as well.

One such extension was presented in \cite{Mo2025}, which addressed a switched system, i.e., a system whose matrix $A(t)$ is piecewise constant.
On each time interval where $A(t)$ remains constant, the system can be regarded as LTI.
The proposed one, which is called the generalized controllability score (GCS), evaluates the importance of each node on these intervals and is essentially identical to the one for LTI systems.
Consequently, the result of the uniqueness analysis for LTI systems can be employed.

However, a limitation of that approach is the requirement to know the switching times at which the matrix $A(t)$ changes in advance.
Although methods to identify switching times even when the system matrix is unknown have been proposed~\cite{Hakem2016}, they require the system matrix to be stable and therefore cannot be applied in general settings.

In this paper, we propose a new extension that is applicable to more general LTV systems, which we also refer to as the controllability score.
The proposed one does not assess the importance of the nodes on each subinterval; instead, it computes the importance over the entire interval.
As a result, even for switched systems, it requires no identification of the switching times.
As in previous studies~\cite{SatoTerasaki2024,SatoKawamura2025}, the interval under analysis is to be chosen by the analyst according to the intended application.

The temporal network model employed in \cite{Li2017}, which is identical to the switched system model considered in \cite{Mo2025}, exhibits solely abrupt changes, i.e., a sudden switching of the network topology as illustrated in \cref{fig:abrupt_change}.
However, gradual changes, in which edge weights vary continuously while the network topology remains fixed (see \cref{fig:gradual_change}), are also often of central importance, for instance in gene-regulatory networks~\cite{Wit2015} and in the detection of pandemics in social networks~\cite{Yamanishi2021}.
Accordingly, as detailed in \cref{subsec:systems}, we handle a model that can capture both abrupt and gradual changes, which is more general than the temporal network model.

\begin{figure}[htbp]
    \centering
    \begin{subfigure}{1.0\linewidth}
        \centering
        \begin{tikzpicture}[scale=0.8]
            \begin{scope}[shift={(0,0)}]
                \node (v1) at (0,0) {1};
                \node (v2) at (2,0) {2};
                \node (v3) at (1,1.6) {3};
                \node (v4) at (1,-1.6) {4};
                
                \draw[thick] (v1) circle[radius=0.30];
                \draw[thick] (v2) circle[radius=0.30];
                \draw[thick] (v3) circle[radius=0.30];
                \draw[thick] (v4) circle[radius=0.30];

                \draw[-Latex,thick] (v1) -- (v2);
                \draw[-Latex,thick] (v1) -- (v3);
                \draw[-Latex,thick] (v3) -- (v2);
                \draw[-Latex,thick] (v2) -- (v4);
            \end{scope}
            
            \draw[->] (3.2,0) -- (4.7,0);
            
            \begin{scope}[shift={(5.5,0)}]
                \node (w1) at (0,0) {1};
                \node (w2) at (2,0) {2};
                \node (w3) at (1,1.6) {3};
                \node (w4) at (1,-1.6) {4};
                
                \draw[thick] (w1) circle[radius=0.30];
                \draw[thick] (w2) circle[radius=0.30];
                \draw[thick] (w3) circle[radius=0.30];
                \draw[thick] (w4) circle[radius=0.30];
                
                \draw[-Latex,thick] (w2) -- (w1);
                \draw[-Latex,thick] (w1) -- (w3);
                \draw[-Latex,thick] (w1) -- (w4);
                \draw[-Latex,thick] (w4) -- (w3);
            \end{scope}
        \end{tikzpicture}
        \subcaption{abrupt change, representing a sudden switching of the network topology}
        \label{fig:abrupt_change}
    \end{subfigure}

    \begin{subfigure}[b]{1.0\linewidth}
        \centering
        \begin{tikzpicture}[scale=0.8]
            \begin{scope}[shift={(0,0)}]
                \node (v1) at (0,0) {1};
                \node (v2) at (2,0) {2};
                \node (v3) at (1,1.6) {3};
                \node (v4) at (1,-1.6) {4};
                
                \draw[thick] (v1) circle[radius=0.30];
                \draw[thick] (v2) circle[radius=0.30];
                \draw[thick] (v3) circle[radius=0.30];
                \draw[thick] (v4) circle[radius=0.30];

                \draw[-Latex,thick] (v1) -- (v2);
                \draw[-Latex,thick] (v1) -- (v3);
                \draw[-Latex,thick] (v3) -- (v2);
                \draw[-Latex,thick] (v2) -- (v4);
            \end{scope}
            
            \draw[->] (3.2,0) -- (4.7,0);
            
            \begin{scope}[shift={(5.5,0)}]
                \node (w1) at (0,0) {1};
                \node (w2) at (2,0) {2};
                \node (w3) at (1,1.6) {3};
                \node (w4) at (1,-1.6) {4};
                
                \draw[thick] (w1) circle[radius=0.30];
                \draw[thick] (w2) circle[radius=0.30];
                \draw[thick] (w3) circle[radius=0.30];
                \draw[thick] (w4) circle[radius=0.30];

                \draw[-Latex,ultra thick] (w1) -- (w2);
                \draw[-Latex,dashed] (w1) -- (w3);
                \draw[-Latex,thick] (w3) -- (w2);
                \draw[-Latex,dashed] (w2) -- (w4);
            \end{scope}
        \end{tikzpicture}
        \subcaption{gradual change, representing continuous variation of edge weights under a fixed topology}
        \label{fig:gradual_change}
    \end{subfigure}
    \caption{Illustrations of the two types of network evolution}
    \label{fig:change}
\end{figure}

\subsection{Challenge}
\label{subsec:challenge}

\begin{itemize}
    \item A significant challenge is the requirement for knowledge of the system.
    Since the controllability score quantitatively evaluates centrality, it requires certain numerical information about the system, for instance, the numerical values of the system matrix.
    Nevertheless, system identification is sometimes difficult, especially for LTV systems~\cref{eq:ltv_base}, unless assumptions such as sparsity are imposed~\cite{Mei2016}, since the system matrix $A(t)$ changes over time.

    \item Since the GCS~\cite{Mo2025} is essentially identical to the controllability score for LTI systems, the results for LTI systems can be directly utilized.
    In contrast, the controllability score extended in this paper deals with LTV systems as they are and therefore lies outside the scope of the results derived for LTI systems.

    \item The dependence of the controllability score on the time parameters was partially studied in \cite{SatoKawamura2025}.
    Specifically, for LTI systems, it was shown that when the time horizon is sufficiently small, the controllability score yields an approximately uniform evaluation.
    However, many aspects, such as continuity, remain unclear even for LTI systems.
\end{itemize}

\subsection{Contribution}
\label{subsec:contribution}

\begin{itemize}
    \item To broaden the applicability of the controllability score, we extend it in a different manner from \cite{Mo2025}.
    Moreover, as discussed in \cref{subsec:difference}, it is expected to capture the system dynamics in greater detail than the GCS~\cite{Mo2025}.
    Furthermore, we show that it is uniquely determined in almost all cases.
    The proof employs a technique similar to that in \cite{SatoKawamura2025}: we prove that the controllability score is unique for almost all time parameter values.
    Consequently, it can be considered practically unique and thus can be employed as a centrality measure.
    In numerical experiments, we demonstrate that the controllability score yields different scores for LTI and LTV systems, and that it can capture properties that existing indices fail to detect.
    These findings suggest the importance of the extension.

    \item As a step toward clarifying the dependence of the controllability score on the time parameters, we prove that it is continuous with respect to the time parameters.
    This result not only provides insight into how the controllability score depends on the time parameters but also ensures practical robustness, since small identification errors of switching times are expected to cause only minor deviations in the controllability score.
    In numerical experiments, we observe that small changes in the time parameters lead to relatively small variations in the controllability score, whereas large changes in the time parameters result in relatively large variations in the controllability score.

    \item We develop a data-driven method to compute the controllability score using experimental data instead of the system matrix.
    Specifically, the proposed method computes the controllability Gramian, which is required to calculate the controllability score.
    Although related work~\cite{Banno2021,Tsuji2024} has proposed data-driven methods for the controllability Gramian, the methods are limited to LTI systems since they employ the Lyapunov equation, which applies to LTI systems but not LTV systems.
    In contrast, our proposed method can be applied to LTV systems since it relies on the integral representation.
    Moreover, we emphasize that the advantage of our proposed method is its applicability not only to temporal network systems, where the system matrix switches at certain times but remains piecewise constant, but also to general systems whose system matrices change continuously over time.
    Systems that are suitably modeled by the former are somewhat easier to identify~\cite{Wang2024}.
    However, for systems where the latter is appropriate, identification is challenging as noted in \cref{subsec:challenge}, highlighting the advantage of this method.
    Furthermore, the GCS~\cite{Mo2025} requires at least the identification of the switching times, whereas the proposed method does not require such identification.
    Numerical experiments show that our proposed method can accurately compute the controllability Gramian when the number of observations is sufficiently large.
\end{itemize}

\subsection{Outline}
\label{subsec:outline}

The rest of the paper is organized as follows.
In \cref{sec:preliminaries}, we introduce notations and summarize essential concepts such as temporal networks and the controllability Gramian.
In \cref{sec:ltv_score}, we extend the controllability score to apply to LTV systems and clarify the differences from \cite{Mo2025}.
We prove its uniqueness for almost all time parameter values and its continuity with respect to the time parameters.
In \cref{sec:algorithm}, we summarize the optimization algorithm to compute the controllability score and propose a data-driven method.
In \cref{sec:experiments}, we show numerical experiments to compare controllability scores with existing centrality indices, to compare them between LTI and LTV systems, to investigate their dependence on the time parameters and the chronological order of the snapshots, and to assess the performance of the proposed data-driven method.
The concluding remarks are given in \cref{sec:conclusion}.

\section{Preliminaries}
\label{sec:preliminaries}

\subsection{Notation}
\label{subsec:notation}

The set of all real numbers and the set of all natural numbers are denoted by $\RR$ and $\NN$, respectively.
Let $n$ denote the number of nodes, and let $I$ and $O$ denote the identity matrix of order $n$ and the $n\times n$ zero matrix, respectively.
Let $e_i\coloneq (0,\ldots,0,1,0,\ldots,0)^\top\in\RR^n$ be a standard vector whose $i$-th element is $1$ and other elements are $0$.
For a vector $v=(v_1,\ldots,v_k)^\top\in\RR^k$, $\norm{v}\coloneq\sqrt{\sum_{i=1}^k v_i^2}$ denotes the standard Euclidean norm.
The symbol $L^2([a,b];\RR^k)$ denotes the set of all square-integrable functions $u\colon [a,b]\to\RR^k$, i.e., $L^2([a,b];\RR^k)\coloneq \left\{u\colon [a,b]\to\RR^k\mid \int_{a}^{b} \norm{u(t)}^2\rd t<\infty \right\}$.
For a square-integrable function $u\in L^2([a,b];\RR^k)$, $\norm{u}_{L^2}\coloneq \sqrt{\int_{a}^{b} \norm{u(t)}^2\rd t}$ denotes the $L^2$ norm.

For a matrix $A\in\RR^{n\times n}$, $\mathrm{e}^A$, $\det A$, and $\mathrm{tr}(A)$ denote the exponential of $A$, the determinant of $A$, and the trace of $A$, respectively.
When $A$ is symmetric, we write $A \succ O$ to mean that $A$ is positive definite.
For $v_1,\ldots,v_n\in\RR$, let $\mathrm{diag}(v_1,\ldots,v_n)$ denote the diagonal matrix with the diagonal elements $v_1,\ldots,v_n$.
Let $\Delta$ be a standard simplex in $\RR^n$, i.e., $\Delta \coloneq \left\{p\in\RR^n \;\middle|\; p_i\geq0\ (i=1,\ldots,n),\ \sum_{i=1}^n p_i=1\right\}$.
A function $\varphi\colon D\to\RR$ is said to be real analytic if, for every point $\overline{x}\in D$, there exists an open neighborhood $V\subset D$ of $\overline{x}$ such that the Taylor series
\begin{equation*}
    \sum_{\alpha\in\NN^k} \dfrac{1}{\alpha!}\partial^{\alpha}\varphi(\overline{x})(x-\overline{x})^{\alpha}
\end{equation*}
converges absolutely on $V$ and coincides with $\varphi(x)$, where its domain $D\subset\RR^k$ is open, $\alpha=(\alpha_1,\ldots,\alpha_k)$ is a multi-index, $\alpha!=\alpha_1!\cdots\alpha_k!$, $\partial^{\alpha}=\partial_{x_1}^{\alpha_1}\cdots\partial_{x_k}^{\alpha_k}$, and $(x-\overline{x})^{\alpha}=(x_1-\overline{x}_1)^{\alpha_1}\cdots(x_k-\overline{x}_k)^{\alpha_k}$.

\subsection{Dynamical systems}
\label{subsec:systems}

In this paper, we consider an LTV system~\cref{eq:ltv_base} in which the matrix $A(t)$ is expressed as
\begin{multline}\label{eq:ltv_matrix}
    A(t;\Delta t)= \\
    \begin{cases}
        A_k(t-t_{k-1})& \text{if}\ t_{k-1}\leq t<t_k\ (k=1,\ldots,m-1), \\
        A_m(t-t_{m-1})& \text{if}\ t_{m-1}\leq t\leq t_m,
    \end{cases}
\end{multline}
where $A_k(t)$ is an $n\times n$ real matrix-valued function that is real analytic on $(0,s_k)$ and continuous on $[0,s_k]$ for some $s_k>0$, $\Delta t=(\Delta t_1,\ldots,\Delta t_m) \ (0\leq \Delta t_k\leq s_k, \ k=1,\ldots,m)$ is a tuple of durations, and $t_0=0,\ t_k=\sum_{\ell=1}^k \Delta t_{\ell}$.
Since a real-analytic function cannot always be extended analytically over the whole interval but only within a bounded region, $\Delta t_k$ cannot take arbitrary positive values; its admissible range may be restricted depending on the form of the function $A_k(t)$.
The constant $s_k>0$ is introduced to define admissible values of $\Delta t_k$, and it is required in \cref{prop:uniqueness} to describe the set of $\Delta t$ for which \cref{prob:vcs,prob:aecs} each admit a unique optimal solution.
However, $s_k$ is used solely to ensure the theoretical rigor of the statements; in practice, there is no need to specify $s_k$ explicitly.

In the case $m=1$ and $A_1(t)\equiv A$, the system reduces to an LTI system \cref{eq:lti_base} on the time interval $[0,\Delta t_1]$.
The system can capture gradual changes in the network structure via the time evolution of each $A_k(t)$ since any continuous function can be approximated arbitrarily well by real-analytic functions, while it can also capture abrupt changes via the possibly discontinuous jump from $A_k(\Delta t_k)$ to $A_{k+1}(0)$ at $t=t_k$ (see \cref{fig:abrupt_change}).
The temporal network model employed in \cite{Li2017}, which is identical to the switched system model employed in \cite{Mo2025}, corresponds to the case where $A_k(t)$ is constant; it accounts for abrupt changes but does not incorporate gradual changes.
We also refer to such a system as a temporal network or a switched system in this paper.

For an LTV system with \cref{eq:ltv_matrix}, $\Phi(t,\tau;\Delta t)$ denotes its state transition matrix~\cite[Definition 4.2]{Chen1999}, which is continuous and satisfies
\begin{equation}\label{eq:transition_definition}
    \dfrac{\partial}{\partial t}\Phi(t,\tau;\Delta t)=A(t;\Delta t)\Phi(t,\tau;\Delta t),\quad \Phi(\tau,\tau;\Delta t)=I.
\end{equation}
Here, we consider \cref{eq:transition_definition} in the sense of one-sided differentials at $t_k\ (k=0,\ldots,m)$.
When $\tau\leq t, t_{k-1}\leq t\leq t_k, t_{\ell-1}\leq \tau\leq t_{\ell}$, from the properties of the state transition matrix,
\begin{multline*}
    \Phi(t,\tau;\Delta t)=\Phi(t,t_{k-1};\Delta t)\Phi(t_{k-1},t_{k-2};\Delta t) \\
    \cdots\Phi(t_{\ell+1},t_{\ell};\Delta t)\Phi(t_{\ell},\tau;\Delta t)
\end{multline*}
holds.
Moreover, let $\Phi_k(t',\tau')$ denote the state transition matrix of the LTV system corresponding to $A_k(t')$.
For $t_{k-1}\leq t'< t$, it follows that $A(t';\Delta t)=A_k(t'-t_{k-1})$; hence, $\Phi(t,t_{k-1};\Delta t)=\Phi_k(t-t_{k-1},0)$.
Similarly, relations such as $\Phi(t_{k-1},t_{k-2};\Delta t)=\Phi_{k-1}(\Delta t_{k-1},0)$ and $\Phi(t_{\ell},\tau;\Delta t)=\Phi_{\ell}(\Delta t_\ell,\tau-t_{\ell-1})$ also hold.
Therefore, we obtain
\begin{equation}\label{eq:transition}
    \begin{split}
        \Phi(t,\tau;\Delta t)&=\Phi_k(t-t_{k-1},0)\Phi_{k-1}(\Delta t_{k-1},0) \\
        &\quad\cdots\Phi_{\ell+1}(\Delta t_{\ell+1},0)\Phi_{\ell}(\Delta t_\ell,\tau-t_{\ell-1}),
    \end{split}
\end{equation}
where $\tau\leq t, t_{k-1}\leq t\leq t_k, t_{\ell-1}\leq \tau\leq t_{\ell}$.
This representation will be employed in the proofs of \cref{prop:analyticity,prop:uniqueness}.

\subsection{Controllability Gramian and minimum-energy control}
\label{subsec:gramian}

In this subsection, we quickly review the controllability Gramian and then summarize the results of minimum-energy control.

We consider the following LTV system with control input:
\begin{equation}\label{eq:ltv_input}
    \dot{x}(t)=A'(t)x(t)+B(t)u(t),
\end{equation}
where $B(t)\in\RR^{n\times k}$ is the input matrix and $u(t)\in\RR^k$ is the control input.
Let $\Phi'(t,\tau)$ denote the state transition matrix corresponding to $A'(t)$.
The finite-time controllability Gramian for \cref{eq:ltv_input} is defined as
\begin{equation*}
    \mathcal{W}(T)\coloneq\int_0^T \Phi'(T,\tau) B(\tau)B(\tau)^\top\Phi'(T,\tau)^\top\rd\tau,
\end{equation*}
which is a symmetric positive semidefinite matrix.
It is well-known that this matrix is related to the controllability of the system \cref{eq:ltv_input}.
The system is, for instance, controllable on the time interval $[0,T]$ if and only if $\mathcal{W}(T)\succ O$~\cite{Kalman1963}.

Here, controllability itself is a concept that focuses solely on whether some control input can steer the state vector from the origin to any desired state within a finite time.
The magnitude of the energy required for such a control input is not considered.
That is, even if a system is controllable, achieving the desired state may be unrealistic since the required energy may be too large~\cite{Gang2015}.
Thus, assessing the ability to control quantitatively is crucial when controlling a system.
In this paper, we also refer to the quantitative control ability as controllability.

The controllability Gramian is also related to quantitative controllability, and the following result is known.
\begin{proposition}[minimum energy for control~\cite{Kalman1963}]\label{prop:ltv_energy}
    Assume that the LTV system \cref{eq:ltv_input} is controllable on the time interval $[0,T]$, which is equivalent to $\mathcal{W}(T)\succ O$.
    Then, for any desired state vector $x_{\mathrm{f}}\in\RR^{n}$, the minimum energy required for driving the state vector from the origin at time $0$ to $x_{\mathrm{f}}$ at time $T$ is given by
    \begin{align*}
        &\min_{u\in L^2([0,T];\RR^k)} \left\{\norm{u}_{L^2}^2 \;\middle|\; x(0)=0,\ x(T)=x_{\mathrm{f}}\ \textrm{under \cref{eq:ltv_input}}\right\} \\
        &=x_{\mathrm{f}}^\top \mathcal{W}(T)^{-1} x_{\mathrm{f}}.
    \end{align*}
\end{proposition}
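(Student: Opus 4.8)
The plan is to recast the constrained minimization as a minimum-norm problem in the Hilbert space $L^2([0,T];\RR^k)$ and to solve it by an orthogonal-projection argument. First I would use the variation-of-constants formula: since $x(0)=0$, the solution of \cref{eq:ltv_input} satisfies $x(T)=\int_0^T \Phi'(T,\tau)B(\tau)u(\tau)\,\rd\tau$. Defining the bounded linear operator $\mathcal{L}\colon L^2([0,T];\RR^k)\to\RR^n$ by $\mathcal{L}u\coloneq\int_0^T\Phi'(T,\tau)B(\tau)u(\tau)\,\rd\tau$, the terminal constraint $x(T)=x_{\mathrm{f}}$ becomes $\mathcal{L}u=x_{\mathrm{f}}$, so the problem is to minimize $\norm{u}_{L^2}^2$ over the affine set $\{u\mid \mathcal{L}u=x_{\mathrm{f}}\}$.

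The key algebraic step is to compute the adjoint $\mathcal{L}^{*}\colon\RR^n\to L^2([0,T];\RR^k)$ and to recognize that $\mathcal{W}(T)=\mathcal{L}\mathcal{L}^{*}$. Pairing $\langle \mathcal{L}u,v\rangle_{\RR^n}$ and moving everything under the integral shows $(\mathcal{L}^{*}v)(\tau)=B(\tau)^\top\Phi'(T,\tau)^\top v$, and substituting this into $\mathcal{L}\mathcal{L}^{*}$ reproduces exactly the integrand in the definition of $\mathcal{W}(T)$. The hypothesis $\mathcal{W}(T)\succ O$ guarantees that $\mathcal{W}(T)^{-1}$ exists, so I would define the candidate optimal input $u^{*}\coloneq\mathcal{L}^{*}\mathcal{W}(T)^{-1}x_{\mathrm{f}}$, that is, $u^{*}(\tau)=B(\tau)^\top\Phi'(T,\tau)^\top\mathcal{W}(T)^{-1}x_{\mathrm{f}}$. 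Feasibility is immediate, since $\mathcal{L}u^{*}=\mathcal{L}\mathcal{L}^{*}\mathcal{W}(T)^{-1}x_{\mathrm{f}}=\mathcal{W}(T)\mathcal{W}(T)^{-1}x_{\mathrm{f}}=x_{\mathrm{f}}$, and its squared norm evaluates to $\norm{u^{*}}_{L^2}^2=\langle \mathcal{W}(T)^{-1}x_{\mathrm{f}},\,\mathcal{L}\mathcal{L}^{*}\mathcal{W}(T)^{-1}x_{\mathrm{f}}\rangle=x_{\mathrm{f}}^\top\mathcal{W}(T)^{-1}x_{\mathrm{f}}$, which is exactly the claimed value.

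To establish optimality I would exploit orthogonality. For any feasible $u$, the difference $u-u^{*}$ lies in $\ker\mathcal{L}$ because $\mathcal{L}(u-u^{*})=x_{\mathrm{f}}-x_{\mathrm{f}}=0$, while $u^{*}$ is in the range of $\mathcal{L}^{*}$; hence $\langle u^{*},u-u^{*}\rangle_{L^2}=\langle \mathcal{W}(T)^{-1}x_{\mathrm{f}},\,\mathcal{L}(u-u^{*})\rangle=0$. The Pythagorean identity then gives $\norm{u}_{L^2}^2=\norm{u^{*}}_{L^2}^2+\norm{u-u^{*}}_{L^2}^2\geq\norm{u^{*}}_{L^2}^2$, with equality precisely when $u=u^{*}$, so $u^{*}$ attains the minimum and the minimal energy equals $x_{\mathrm{f}}^\top\mathcal{W}(T)^{-1}x_{\mathrm{f}}$. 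The only genuinely delicate point is the bookkeeping of the adjoint together with the verification $\mathcal{W}(T)=\mathcal{L}\mathcal{L}^{*}$; once that identity is secured, the remainder is a standard projection argument, and no analytic subtlety beyond the square-integrability already built into the function space is required.
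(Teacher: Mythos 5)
Your proof is correct, and it is essentially the classical argument behind the result the paper cites from Kalman (1963) rather than proves itself: exhibit the candidate input $u^{*}(\tau)=B(\tau)^\top\Phi'(T,\tau)^\top\mathcal{W}(T)^{-1}x_{\mathrm{f}}$, verify feasibility and its energy via $\mathcal{W}(T)=\mathcal{L}\mathcal{L}^{*}$, and conclude optimality by orthogonality of $u-u^{*}\in\ker\mathcal{L}$ to the range of $\mathcal{L}^{*}$. Since the paper defers the proof entirely to the literature and your minimum-norm projection argument is precisely that standard proof, there is nothing to reconcile.
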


Therefore, by employing the controllability Gramian, we can quantitatively evaluate the controllability of the LTV system \cref{eq:ltv_input}.
More specifically, we can utilize the following result~\cite[Proposition~4.6]{DullerudPaganini2000}, which follows from \cref{prop:ltv_energy}.
\begin{proposition}\label{prop:volume}
    Assume that the LTV system \cref{eq:ltv_input} is controllable on the time interval $[0,T]$.
    Then, the reachable space defined as
    \begin{align*}
        &\mathcal{E}(T)\coloneq \\
        &\left\{x_{\mathrm{f}}\in\RR^n \;\middle|\;
        \begin{gathered}
            \text{There exists $u\in L^2([0,T];\RR^k)$ s.t.} \\
            \text{$\norm{u}_{L^2}\leq 1,x(0)=0,x(T)=x_{\mathrm{f}}$ under \cref{eq:ltv_input}.}
        \end{gathered}
        \right\}
    \end{align*}
    can be expressed as
    \begin{equation*}
        \mathcal{E}(T)=\left\{y\in\RR^n \;\middle|\; y^\top \mathcal{W}(T)^{-1} y\leq 1\right\},
    \end{equation*}
    and, therefore, its volume is proportional to $\sqrt{\det \mathcal{W}(T)}$.
\end{proposition}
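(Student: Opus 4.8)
The plan is to obtain the set equality directly from the minimum-energy characterization in \cref{prop:ltv_energy}, and then to compute the volume of the resulting ellipsoid by diagonalizing $\mathcal{W}(T)$. The controllability assumption guarantees $\mathcal{W}(T)\succ O$, so $\mathcal{W}(T)^{-1}$ exists and is positive definite, which is all the algebraic input we need.

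First I would prove the two inclusions that give $\mathcal{E}(T)=\{y\in\RR^n\mid y^\top\mathcal{W}(T)^{-1}y\leq 1\}$. Suppose $x_{\mathrm{f}}$ satisfies $x_{\mathrm{f}}^\top\mathcal{W}(T)^{-1}x_{\mathrm{f}}\leq 1$. By \cref{prop:ltv_energy}, the minimum of $\norm{u}_{L^2}^2$ over all controls steering the origin to $x_{\mathrm{f}}$ equals $x_{\mathrm{f}}^\top\mathcal{W}(T)^{-1}x_{\mathrm{f}}$, and this minimum is attained by some optimal control $u^\star$. Hence $\norm{u^\star}_{L^2}^2=x_{\mathrm{f}}^\top\mathcal{W}(T)^{-1}x_{\mathrm{f}}\leq 1$, so $u^\star$ witnesses $x_{\mathrm{f}}\in\mathcal{E}(T)$. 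Conversely, if $x_{\mathrm{f}}\in\mathcal{E}(T)$ then some admissible $u$ with $\norm{u}_{L^2}\leq 1$ drives $0$ to $x_{\mathrm{f}}$; since the minimum energy cannot exceed the energy of this particular $u$, \cref{prop:ltv_energy} gives $x_{\mathrm{f}}^\top\mathcal{W}(T)^{-1}x_{\mathrm{f}}\leq\norm{u}_{L^2}^2\leq 1$. Combining the two directions yields the claimed expression for $\mathcal{E}(T)$.

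For the volume claim, I would pass to the spectral decomposition $\mathcal{W}(T)=Q\Lambda Q^\top$, where $Q$ is orthogonal and $\Lambda=\mathrm{diag}(\lambda_1,\ldots,\lambda_n)$ with each $\lambda_i>0$. Under the orthogonal, hence volume-preserving, change of variables $y=Qz$, the defining inequality becomes $\sum_{i=1}^n z_i^2/\lambda_i\leq 1$, an axis-aligned ellipsoid with semi-axes $\sqrt{\lambda_1},\ldots,\sqrt{\lambda_n}$. Its volume is $V_n\prod_{i=1}^n\sqrt{\lambda_i}=V_n\sqrt{\det\mathcal{W}(T)}$, where $V_n$ denotes the volume of the unit ball in $\RR^n$. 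Since $V_n$ is a dimensional constant independent of the system, the volume of $\mathcal{E}(T)$ is proportional to $\sqrt{\det\mathcal{W}(T)}$.

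I expect no serious obstacle: the whole argument reduces to \cref{prop:ltv_energy} together with standard linear algebra. The only point requiring slight care is the equivalence between ``minimum energy $\leq 1$'' and ``there exists a control with $\norm{u}_{L^2}\leq 1$'', which relies on the minimum in \cref{prop:ltv_energy} being attained rather than merely an infimum; this is already built into the statement of that proposition, so the reasoning goes through cleanly.
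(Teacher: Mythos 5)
Your proof is correct and takes essentially the same route as the paper: the paper does not spell out a proof at all, but presents \cref{prop:volume} as a consequence of \cref{prop:ltv_energy}, deferring the details to \cite[Proposition~4.6]{DullerudPaganini2000}. Your two-inclusion argument (using attainment of the minimum, which is indeed built into the statement of \cref{prop:ltv_energy}) together with the standard ellipsoid volume computation supplies exactly the details that the paper's citation delegates.
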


Without any constraint on the input energy, any state is reachable when a controllable system is considered; however, in practice, physical constraints exist.
We therefore consider the set of states that can be reached with inputs of energy at most one, namely the reachable set, as the set of states achievable under realistic constraints.
The larger this set is, the easier the system is to control.
Thus, the controllability of the system can be evaluated by the volume of the reachable set.
Consequently, we can use $\sqrt{\det \mathcal{W}(T)}$, or $\log \det \mathcal{W}(T)$, as a measure of controllability for LTV systems~\cref{eq:ltv_input}~\cite{MullerWeber1972}.
Since $\log\det \mathcal{W}(T)$ can often be computed more stably than $\sqrt{\det \mathcal{W}(T)}$ and its concavity is useful for optimization, we use $\log\det \mathcal{W}(T)$ rather than $\sqrt{\det \mathcal{W}(T)}$.

Alternatively, we can also utilize the following result~\cite{Kalman1963}.

\begin{proposition}\label{prop:average_energy}
    Assume that the LTV system \cref{eq:ltv_input} is controllable on the time interval $[0,T]$.
    Then, the average of the minimum energy required to drive the state vector from the origin to the point on the unit sphere $\{y\in\RR^n\mid \norm{y}=1\}$ over the uniform distribution is proportional to $\mathrm{tr}\left(\mathcal{W}(T)^{-1}\right)$.
\end{proposition}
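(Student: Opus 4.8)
The plan is to combine \cref{prop:ltv_energy} with a symmetry computation of the spherical average of the quadratic form $y^\top\mathcal{W}(T)^{-1}y$. By \cref{prop:ltv_energy}, for each target state $y$ on the unit sphere $S^{n-1}=\{y\in\RR^n\mid\norm{y}=1\}$, the minimum energy needed to steer the state from the origin to $y$ equals $y^\top\mathcal{W}(T)^{-1}y$. Writing $\sigma$ for the surface (Lebesgue) measure on $S^{n-1}$ and $\lvert S^{n-1}\rvert$ for its total measure, the quantity to evaluate is the average
\begin{equation*}
    \frac{1}{\lvert S^{n-1}\rvert}\int_{S^{n-1}} y^\top\mathcal{W}(T)^{-1}y\,\rd\sigma(y),
\end{equation*}
and the goal is to show this equals a dimensionless constant times $\mathrm{tr}\!\left(\mathcal{W}(T)^{-1}\right)$.

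First I would expand the integrand coordinatewise. Setting $M\coloneq\mathcal{W}(T)^{-1}=(M_{ij})$, which is symmetric and positive definite since the system is controllable, we have $y^\top M y=\sum_{i,j} M_{ij}\,y_i y_j$, so by linearity the integral reduces to evaluating the moments $\int_{S^{n-1}} y_i y_j\,\rd\sigma(y)$. The key step is a symmetry argument. For $i\neq j$, the reflection $y_i\mapsto -y_i$ leaves $\sigma$ invariant but flips the sign of $y_i y_j$, so each off-diagonal moment vanishes. For the diagonal moments, the rotational invariance of $\sigma$ under coordinate permutations forces $\int_{S^{n-1}} y_i^2\,\rd\sigma(y)$ to take a common value $c$ independent of $i$.

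Next I would fix the constant $c$ using the defining constraint of the sphere. Since $\sum_{i=1}^n y_i^2=1$ identically on $S^{n-1}$, integrating gives $n\,c=\int_{S^{n-1}} \sum_i y_i^2\,\rd\sigma(y)=\lvert S^{n-1}\rvert$, hence $c=\lvert S^{n-1}\rvert/n$. Combining the two moment evaluations yields
\begin{equation*}
    \int_{S^{n-1}} y^\top M y\,\rd\sigma(y)=\sum_{i=1}^n M_{ii}\,c=\frac{\lvert S^{n-1}\rvert}{n}\,\mathrm{tr}(M),
\end{equation*}
so that the average equals $\mathrm{tr}(M)/n$, which is proportional to $\mathrm{tr}\!\left(\mathcal{W}(T)^{-1}\right)$ as claimed.

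This argument is essentially elementary, and I do not anticipate a serious obstacle; the only point requiring mild care is justifying the vanishing of off-diagonal moments and the equality of diagonal moments rigorously via the invariance of the spherical measure under reflections and coordinate permutations, rather than merely asserting it. An alternative that sidesteps explicit moment bookkeeping would be to invoke the standard fact that for any symmetric $M$ the spherical average of $y^\top M y$ equals $\mathrm{tr}(M)/n$, which follows from the rotational invariance of $\sigma$: averaging $(Qy)^\top M (Qy)$ over orthogonal $Q$ replaces $M$ by $\frac{1}{n}\mathrm{tr}(M)\,I$. Either route gives the same conclusion.
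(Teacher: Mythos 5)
Your proposal is correct. Note that the paper itself offers no proof of this proposition---it is stated as a known result with a citation to Kalman's work---so there is no in-paper argument to compare against; your derivation supplies the standard reasoning that underlies the cited result: \cref{prop:ltv_energy} identifies the minimum energy to reach $y$ as the quadratic form $y^\top \mathcal{W}(T)^{-1} y$, and the spherical moment computation (off-diagonal moments vanish by reflection invariance, diagonal moments equal $1/n$ after normalization by the constraint $\sum_i y_i^2 = 1$) gives the average $\mathrm{tr}\left(\mathcal{W}(T)^{-1}\right)/n$, which is proportional to the trace as claimed. Both of your suggested routes (coordinatewise moments or averaging over the orthogonal group) are standard and rigorous, so the proposal stands as a complete, self-contained proof of the statement.
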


Since physical constraints limit the input energy, the smaller the energy required to reach a desired state, the easier it is to realize that state.
If the desired state is given in advance, it suffices to consider only the energy required to reach it.
If not, however, a system that can reach a variety of states with low average energy should be regarded as more controllable.
Thus, the controllability of the system can be evaluated by the average of the minimum energy.
Consequently, we can regard $\mathrm{tr}\left(\mathcal{W}(T)^{-1}\right)$ as a measure of controllability for LTV systems~\cref{eq:ltv_input}~\cite{MullerWeber1972}.

LTV systems~\cref{eq:ltv_input} include LTI systems
\begin{equation}\label{eq:lti_input}
    \dot{x}(t)=Ax(t)+Bu(t)
\end{equation}
as a special case.
Therefore, \cref{prop:ltv_energy,prop:volume,prop:average_energy} also hold for LTI systems~\cref{eq:lti_input}.
For an LTI system~\cref{eq:lti_input}, the controllability Gramian can be expressed as follows:
\begin{equation*}
    \mathcal{W}(T)=\int_0^T \mathrm{e}^{\tau A}BB^\top \mathrm{e}^{\tau A^\top}\rd\tau.
\end{equation*}

\section{Controllability scores for LTV systems}
\label{sec:ltv_score}

In this section, we extend the concept of the controllability score, initially proposed for LTI systems on networks, to LTV systems on networks.
First, we formulate optimization problems and define the controllability score for LTV systems as its optimal solution in \cref{subsec:settings}.
Next, in \cref{subsec:difference}, we elaborate on the distinctions between ours and the one proposed in \cite{Mo2025}.
Then, we prove that the optimal solution is almost always unique in \cref{subsec:uniqueness}.
Finally, we examine the dependence of the controllability score on the time parameters.
Specifically, we show the continuity of the controllability score in \cref{subsec:continuity}.

\subsection{Formulation and definition}
\label{subsec:settings}

The controllability score~\cite{SatoTerasaki2024} is a network centrality measure that assesses the significance of each node in the dynamical network system.
Previous studies~\cite{SatoTerasaki2024,SatoKawamura2025} have focused on LTI system models \cref{eq:lti_base} of large-scale network systems.
In this section, we consider LTV system models \cref{eq:ltv_base} represented by $A(t;\Delta t)$ defined in \cref{subsec:systems}, and we extend the concept to apply to it in a different manner from \cite{Mo2025}.
Here, $x(t)=(x_1(t),\ldots,x_n(t))^\top\in\RR^n$ and $A(t;\Delta t)$ represent the states of the nodes and the time-varying structure of the network, respectively.

To define the controllability score, we consider the following equation, which adds a hypothetical control input term to \cref{eq:ltv_base}:
\begin{equation}\label{eq:ltv_diagonal}
    \dot{x}(t)=A(t;\Delta t)x(t)+\mathrm{diag}(\sqrt{p_1},\ldots,\sqrt{p_n})u(t),
\end{equation}
where $u(t)=(u_1(t),\ldots,u_n(t))^\top\in\RR^n$ is a hypothetical control input and $p=(p_1,\ldots,p_n)^\top\in\RR^n$ is assumed to satisfy
\begin{align}\label{eq:p_constraint}
        p_i\geq0\quad(i=1,\ldots,n), \quad 
        \sum_{i=1}^n p_i=1.
\end{align}
\Cref{eq:ltv_diagonal} corresponds to an LTV system \cref{eq:ltv_input} where $B(t)\equiv\mathrm{diag}(\sqrt{p_1},\ldots,\sqrt{p_n})$.
Note that $p$ is not dependent on $t$, as detailed in \cref{subsec:difference}.
From \cref{eq:ltv_diagonal}, node $x_i$ and input $u_i$ correspond one-to-one.
The larger $p_i$ is, the more significant the influence of control input $u_i$ on node $x_i$ can be.

Here, let us regard $p$ as a design variable and consider maximizing the controllability of the system \cref{eq:ltv_diagonal} with respect to some measure.
When the optimal solution is $p^*=(p_1^*,\ldots,p_n^*)^\top$, if $p_i^*$ is large, it indicates that we can efficiently control the system \cref{eq:ltv_diagonal} by actively influencing node $x_i$.
Thus, we can consider node $x_i$ as a pivotal node.
On the other hand, if $p_i^*$ is small, it suggests that we can control the system \cref{eq:ltv_diagonal} without significantly influencing node $x_i$, meaning that node $x_i$ is not a critical node.
Therefore, the optimal solution $p^*$ can be interpreted as the importance of each node, and the constraint \cref{eq:p_constraint} represents the importance distribution, where it is nonnegative and the sum equals $1$.
The controllability score is defined as the optimal solution $p^*$.

Let
\begin{equation}\label{eq:p_gramian}
    W(p;\Delta t)\coloneq\int_{0}^{t_m}\Phi(t_m,\tau;\Delta t)\left\{\mathrm{diag}(p)\right\}\Phi(t_m,\tau;\Delta t)^\top\rd\tau
\end{equation}
denote the controllability Gramian for the considered LTV system \cref{eq:ltv_diagonal}.
As described in \cref{subsec:gramian}, several indices for controllability are possible. Accordingly, let us consider the following two optimization problems.
\begin{problem}\label{prob:vcs}
    \begin{align*}
        \minimize_{p} &\quad -\log\det W(p;\Delta t) \\
        \mathrm{subject\ to} &\quad p\in\Delta,\ W(p;\Delta t)\succ O.
    \end{align*}
\end{problem}
\begin{problem}\label{prob:aecs}
    \begin{align*}
        \minimize_{p} &\quad \mathrm{tr}\left(W(p;\Delta t)^{-1}\right) \\
        \mathrm{subject\ to} &\quad p\in\Delta,\ W(p;\Delta t)\succ O.
    \end{align*}
\end{problem}
Note that the constraint $p\in\Delta$ is equivalent to \cref{eq:p_constraint}.
The optimal solutions to \cref{prob:vcs,prob:aecs} are referred to as the volumetric controllability score (VCS) and the average energy controllability score (AECS), respectively.

However, two points should be noted when interpreting the controllability score as the importance of each node.
The first point is that an optimal solution must exist; however, this is not a significant issue since it can be easily proved in the same manner as the case of LTI systems.

\begin{theorem}\label{prop:existence}
    The optimal solutions to \cref{prob:vcs,prob:aecs} exist.
\end{theorem}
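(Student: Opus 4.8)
The plan is to invoke the Weierstrass extreme value theorem, but the essential difficulty is that the feasible region $\mathcal{F}\coloneq\{p\in\Delta \mid W(p;\Delta t)\succ O\}$ is not compact: although the simplex $\Delta$ is compact, the strict constraint $W(p;\Delta t)\succ O$ removes part of its relative boundary, so $\mathcal{F}$ is only relatively open in $\Delta$. Both objectives are continuous on $\mathcal{F}$ but need not attain their infima there a priori. I would resolve this by exploiting the fact that both objectives diverge to $+\infty$ as $p$ approaches the set on which $W(p;\Delta t)$ degenerates, which restores compactness through a sublevel-set argument (equivalently, by extending each objective lower-semicontinuously to all of $\Delta$).

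First I would verify well-posedness, i.e., that $\mathcal{F}\neq\emptyset$ (assuming $t_m>0$, as otherwise $W\equiv O$ and the statement is vacuous). Taking the uniform point $p^0=(1/n,\ldots,1/n)^\top\in\Delta$ gives $\mathrm{diag}(p^0)=\tfrac1n I$ and hence
\[
W(p^0;\Delta t)=\frac1n\int_0^{t_m}\Phi(t_m,\tau;\Delta t)\Phi(t_m,\tau;\Delta t)^\top\rd\tau .
\]
Since every state transition matrix is invertible, the integrand is continuous and positive definite on $[0,t_m]$, so its integral over the nondegenerate interval is positive definite; thus $p^0\in\mathcal{F}$. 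Next I would record the regularity of $W$: the map $p\mapsto W(p;\Delta t)$ is linear, hence continuous on $\Delta$, so $\det W(p;\Delta t)$ and the eigenvalues of $W(p;\Delta t)$ depend continuously on $p$, and consequently both $-\log\det W(p;\Delta t)$ and $\mathrm{tr}(W(p;\Delta t)^{-1})$ are continuous on $\mathcal{F}$.

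The core step is the boundary blow-up. Let $f$ denote either objective, set $c_0\coloneq f(p^0)<\infty$, and consider the sublevel set $S\coloneq\{p\in\mathcal{F}\mid f(p)\leq c_0\}\subseteq\Delta$, which is nonempty and bounded. I claim $S$ is closed. Take $p^{(j)}\in S$ with $p^{(j)}\to p^\star\in\Delta$. If $W(p^\star;\Delta t)$ were singular, then by continuity $\det W(p^{(j)};\Delta t)\to0$ and $\lambda_{\min}\!\left(W(p^{(j)};\Delta t)\right)\to0$, whence $-\log\det W(p^{(j)};\Delta t)\to+\infty$ and $\mathrm{tr}\!\left(W(p^{(j)};\Delta t)^{-1}\right)\geq 1/\lambda_{\min}\!\left(W(p^{(j)};\Delta t)\right)\to+\infty$; either way $f(p^{(j)})\to+\infty$, contradicting $f(p^{(j)})\leq c_0$. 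Hence $W(p^\star;\Delta t)\succ O$, so $p^\star\in\mathcal{F}$, and continuity gives $f(p^\star)\leq c_0$, i.e., $p^\star\in S$. Thus $S$ is compact, $f$ is continuous on it, and the extreme value theorem yields a minimizer $p^*\in S$; since $f(p)>c_0\geq f(p^*)$ for every $p\in\mathcal{F}\setminus S$, the point $p^*$ in fact minimizes $f$ over all of $\mathcal{F}$, solving \cref{prob:vcs} (resp.\ \cref{prob:aecs}).

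The step I expect to be the main obstacle is precisely this non-compactness of $\mathcal{F}$, handled above via the divergence of the objective at the singular boundary; the remaining ingredients (nonemptiness through the uniform $p^0$, continuity of $W$, and the concluding Weierstrass argument) are routine, which is consistent with the paper's remark that existence can be established as in the LTI case.
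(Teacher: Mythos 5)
Your proof is correct and follows essentially the same route as the paper, which simply defers to the LTI case (Theorem 2 of the cited controllability-score paper): the argument there is exactly this Weierstrass-type one, and it transfers verbatim because it only uses linearity of $p\mapsto W(p;\Delta t)$, the blow-up of both objectives as $W$ degenerates, and compactness of $\Delta$. Your explicit caveat that $t_m>0$ is needed (so that the feasible set is nonempty) is a point the paper leaves implicit, and handling it is a small bonus rather than a deviation.
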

\begin{proof}
    See \cite[Theorem 2]{SatoTerasaki2024}. \qed
\end{proof}

The second point, a more significant issue, is that the optimal solution must be unique.
If it is not unique, a reproducibility problem arises since different researchers analyzing the same network may arrive at different conclusions.
In addition, from the perspective of interpretability, there is also an issue of determining node importance based on the solutions, which remains unclear.
For these reasons, the optimal solution must be unique.
However, there exists an LTI system with a specific time parameter value for which the optimal solutions to \cref{prob:vcs,prob:aecs} are not unique~\cite{SatoTerasaki2024}.
Therefore, clarifying the conditions under which controllability scores are uniquely determined is crucial for utilizing them as a centrality measure.
The following results for LTI systems have been presented regarding this issue.

\begin{proposition}[\cite{SatoTerasaki2024}]\label{prop:lti_unique_stable}
    Assume that $A$ in \cref{eq:lti_base} is stable, i.e., each eigenvalue has a negative real part.
    Then, for any duration $\Delta t_1>0$, \cref{prob:vcs,prob:aecs} each admit a unique optimal solution.
\end{proposition}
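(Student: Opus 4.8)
The plan is to exploit convexity and reduce uniqueness to an injectivity property of the Gramian map. First I would specialize \cref{eq:p_gramian} to the LTI case $m=1$, $A_1(t)\equiv A$. Writing $T\coloneq\Delta t_1$ and substituting $s=T-\tau$ gives $W(p)=\int_0^T \mathrm{e}^{sA}\,\mathrm{diag}(p)\,\mathrm{e}^{sA^\top}\rd s=\sum_{i=1}^n p_i M_i$, where $M_i\coloneq\int_0^T \mathrm{e}^{sA}e_ie_i^\top\mathrm{e}^{sA^\top}\rd s$ is symmetric positive semidefinite. Thus $p\mapsto W(p)$ is affine. Since $-\log\det(\cdot)$ and $\mathrm{tr}((\cdot)^{-1})$ are convex on the positive-definite cone, both objectives are convex on the convex feasible set $\{p\in\Delta\mid W(p)\succ O\}$, and existence is already provided by \cref{prop:existence}. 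It therefore suffices to prove that each objective is \emph{strictly} convex along every segment of the feasible set, i.e.\ that for every nonzero direction $d$ with $\sum_i d_i=0$ the second directional derivative is positive.

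Second, I would compute these Hessians explicitly. For a feasible direction $d$, setting $\widetilde M\coloneq\sum_i d_i M_i$, a direct differentiation yields
\[
 d^\top\nabla^2\!\left(-\log\det W\right)d=\mathrm{tr}\!\left(W^{-1}\widetilde M W^{-1}\widetilde M\right)=\norm{W^{-1/2}\widetilde M W^{-1/2}}_F^2,
\]
and similarly $d^\top\nabla^2\,\mathrm{tr}(W^{-1})\,d=2\,\mathrm{tr}\!\left(W^{-1}\widetilde M W^{-1}\widetilde M W^{-1}\right)$, which is likewise a nonnegative sum of squares. Both vanish if and only if $\widetilde M=O$, because $W^{-1}\succ O$. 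Hence strict convexity of either objective is equivalent to the injectivity of $p\mapsto W(p)$ on the directions $\sum_i d_i=0$; it is more than enough to show that $M_1,\dots,M_n$ are linearly independent.

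The crux, and the step where stability is indispensable, is this linear-independence claim. Suppose $\sum_i c_i M_i=O$ for some $c\in\RR^n$, i.e.\ $G\coloneq\int_0^T \mathrm{e}^{sA}\,\mathrm{diag}(c)\,\mathrm{e}^{sA^\top}\rd s=O$. Because $A$ is stable, the infinite Gramian $P\coloneq\int_0^\infty \mathrm{e}^{sA}\,\mathrm{diag}(c)\,\mathrm{e}^{sA^\top}\rd s$ converges and solves the Lyapunov equation $AP+PA^\top=-\mathrm{diag}(c)$. Splitting $\int_0^T=\int_0^\infty-\int_T^\infty$ and using the semigroup identity $\int_T^\infty \mathrm{e}^{sA}\,\mathrm{diag}(c)\,\mathrm{e}^{sA^\top}\rd s=\mathrm{e}^{TA}P\,\mathrm{e}^{TA^\top}$ gives $G=P-\mathrm{e}^{TA}P\,\mathrm{e}^{TA^\top}$. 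Thus $G=O$ is the discrete Stein equation $P=\mathrm{e}^{TA}P\,\mathrm{e}^{TA^\top}$. Since $A$ is stable, $\mathrm{e}^{TA}$ has spectral radius strictly less than $1$, so the Stein operator $X\mapsto X-\mathrm{e}^{TA}X\mathrm{e}^{TA^\top}$ is invertible (its eigenvalues are $1-\mu_i\mu_j$ with $|\mu_i|<1$); therefore $P=O$, and the Lyapunov equation forces $\mathrm{diag}(c)=O$, i.e.\ $c=0$.

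Finally I would assemble the pieces: each objective is strictly convex on the convex feasible set, hence has at most one minimizer, and \cref{prop:existence} guarantees at least one, so uniqueness holds for every $T=\Delta t_1>0$. I expect the main obstacle to be precisely the linear-independence step, whose proof must invoke stability in an essential way: for the rotation matrix $A=\bigl(\begin{smallmatrix}0&-1\\1&0\end{smallmatrix}\bigr)$ one has $\int_0^\pi \mathrm{e}^{sA}\,\mathrm{diag}(1,-1)\,\mathrm{e}^{sA^\top}\rd s=O$ with a nonzero diagonal argument, so injectivity—and hence uniqueness—genuinely fails without the stability hypothesis. This makes clear that the Lyapunov/Stein reduction, rather than any purely convex-analytic argument, is the decisive ingredient.
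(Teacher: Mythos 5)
Your proof is correct, and it is worth noting that the paper itself offers no proof of \cref{prop:lti_unique_stable}: the statement is imported from the reference with a bare citation, so your argument is a genuine self-contained reconstruction rather than a paraphrase of anything in this paper. Each step checks out: the reduction $W(p)=\sum_i p_i M_i$ with $M_i=\int_0^T \mathrm{e}^{sA}e_ie_i^\top\mathrm{e}^{sA^\top}\rd s$; the Hessian identities, whose vanishing forces $\widetilde M=O$ because conjugation by $W^{-1/2}\succ O$ is injective; the Lyapunov identity $AP+PA^\top=-\mathrm{diag}(c)$ and the splitting $G=P-\mathrm{e}^{TA}P\mathrm{e}^{TA^\top}$; and the invertibility of the Stein operator when the spectral radius of $\mathrm{e}^{TA}$ is below one, which is exactly where stability enters. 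Your rotation-matrix counterexample is also verifiably correct ($\int_0^\pi \mathrm{e}^{sA}\mathrm{diag}(1,-1)\mathrm{e}^{sA^\top}\rd s=O$), and it is consistent with the paper's remark after \cref{prop:uniqueness} that uniqueness can fail for specific non-stable systems and time parameters. The most useful comparison is with the machinery the paper does develop for the LTV case, namely \cref{prop:uniqueness_lemma}, which reduces uniqueness to regularity of the matrix $R(\Delta t)$ in \cref{eq:R_matrix}. Observe that $(R(\Delta t)c)_i$ is precisely the $i$-th diagonal entry of $\sum_j c_j M_j$, so regularity of $R$ implies your linear-independence condition; your condition is therefore the weaker (more widely applicable) sufficient hypothesis, while the paper's $R$-based condition is chosen because $\det R(\Delta t)$ is real analytic in $\Delta t$, which drives the almost-everywhere argument of \cref{prop:uniqueness}. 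In short: the paper's route trades sharpness for analyticity to get uniqueness for almost all durations of arbitrary systems, whereas your Lyapunov--Stein route exploits stability to get uniqueness for \emph{every} $T>0$, which is exactly what the stable hypothesis buys.
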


\begin{proposition}[\cite{SatoKawamura2025}]\label{prop:lti_unique_any}
    Assume that $A$ in \cref{eq:lti_base} is arbitrary.
    Then, for almost all durations $\Delta t_1>0$, \cref{prob:vcs,prob:aecs} each admit a unique optimal solution.
\end{proposition}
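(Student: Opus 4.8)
The plan is to reduce uniqueness to \emph{strict} convexity of each objective, and then to show that strict convexity can fail only on a set of durations of measure zero. For the LTI system~\cref{eq:lti_base}, the Gramian~\cref{eq:p_gramian} reads $W(p;\Delta t_1)=\int_{0}^{\Delta t_1}\mathrm{e}^{sA}\,\mathrm{diag}(p)\,\mathrm{e}^{sA^\top}\rd s=\sum_{i=1}^{n}p_i\,W_i(\Delta t_1)$ with $W_i(\Delta t_1)\coloneq\int_{0}^{\Delta t_1}\mathrm{e}^{sA}e_ie_i^\top\mathrm{e}^{sA^\top}\rd s$, so $p\mapsto W(p;\Delta t_1)$ is affine. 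Both $X\mapsto-\log\det X$ and $X\mapsto\mathrm{tr}(X^{-1})$ are strictly convex on the positive definite cone (their second directional derivatives along $H\neq O$ equal $\mathrm{tr}(X^{-1}HX^{-1}H)$ and $2\,\mathrm{tr}(X^{-1}HX^{-1}HX^{-1})$, both strictly positive). Hence $p\mapsto-\log\det W(p;\Delta t_1)$ and $p\mapsto\mathrm{tr}(W(p;\Delta t_1)^{-1})$ are strictly convex on the feasible set as soon as the affine map $p\mapsto W(p;\Delta t_1)$ is injective on the hyperplane $\{\,p\mid\sum_i p_i=1\,\}$; a sufficient condition is that $W_1(\Delta t_1),\dots,W_n(\Delta t_1)$ be linearly independent in the space of symmetric matrices. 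Under that condition, strict convexity together with the existence result \cref{prop:existence} forces a unique minimizer for each of \cref{prob:vcs,prob:aecs}.

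It remains to show that $W_1(\Delta t_1),\dots,W_n(\Delta t_1)$ are linearly independent for almost all $\Delta t_1>0$. I would capture dependence through the Gram determinant $\Psi(\Delta t_1)\coloneq\det\bigl[(\mathrm{tr}(W_i(\Delta t_1)W_j(\Delta t_1)))_{i,j=1}^{n}\bigr]$, which vanishes exactly when the $W_i$ are linearly dependent. Each $W_i(\Delta t_1)$ is real analytic in $\Delta t_1$, being the integral of an entire matrix-valued function, and therefore so is $\Psi$. A one-variable real analytic function that is not identically zero has only isolated zeros, so its zero set has measure zero. Consequently, once $\Psi\not\equiv0$ is established, the matrices $W_i$ are linearly independent for all but countably many durations, which proves the proposition.

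The one delicate step is verifying $\Psi\not\equiv0$, which I would settle by a short-horizon expansion. From $\mathrm{e}^{sA}e_ie_i^\top\mathrm{e}^{sA^\top}=e_ie_i^\top+O(s)$ we get $W_i(\Delta t_1)=\Delta t_1\,e_ie_i^\top+O(\Delta t_1^2)$, whence $\mathrm{tr}(W_i(\Delta t_1)W_j(\Delta t_1))=\Delta t_1^2\,\delta_{ij}+O(\Delta t_1^3)$ and $\Psi(\Delta t_1)=\Delta t_1^{2n}\bigl(1+O(\Delta t_1)\bigr)$, which is strictly positive for all sufficiently small $\Delta t_1>0$; hence $\Psi\not\equiv0$. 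The underlying reason is that over short horizons the Gramian contributions $W_i$ degenerate to the diagonal matrix units $e_ie_i^\top$, which are manifestly independent, and real analyticity then propagates this generic independence to almost every duration. This nonvanishing is the main obstacle: the convexity reduction and the analytic zero-set argument are routine, but one must still exhibit at least one duration realizing the strictly convex (independent) case, and the $\Delta t_1\to0^{+}$ asymptotics provide it.
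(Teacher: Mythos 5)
Your proof is correct, and it shares the overall architecture of the paper's argument, but with a different key certificate, so a comparison is worthwhile. Note first that the paper does not reprove this proposition; it cites \cite{SatoKawamura2025}, and the relevant comparison is the paper's own proof of the LTV generalization (\cref{prop:uniqueness}), which proceeds exactly as you do at the structural level: reduce uniqueness to strict convexity, certify strict convexity by nonvanishing of a determinant that is real analytic in the durations, verify nonvanishing by a short-horizon expansion in which everything degenerates to a multiple of the identity, and conclude via the analytic zero-set lemma (\cref{prop:analytic_zero}). The difference is the certificate and how much is proved from scratch. The paper's certificate is regularity of the matrix $R$ in \cref{eq:R_matrix}; since $R(\Delta t)_{ij}=\int_0^{t_m}\left\{e_i^\top\Phi(t_m,\tau;\Delta t)e_j\right\}^2\rd\tau=(W_j)_{ii}$, this amounts to linear independence in $\RR^n$ of the \emph{diagonals} of the Gramians $W_1,\ldots,W_n$, and the implication ``$R$ regular $\Rightarrow$ unique optimum'' is imported as a black box from \cite{SatoKawamura2025} (\cref{prop:uniqueness_lemma}). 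You instead re-derive the strict-convexity reduction yourself (strict convexity of $-\log\det$ and $\mathrm{tr}((\cdot)^{-1})$ on the positive definite cone, composed with an affine map injective on the affine hull of $\Delta$) and certify injectivity by the Gram determinant $\Psi$ of the full matrices $W_i$ in the trace inner product. Your hypothesis is strictly weaker: $R$ regular implies the $W_i$ are linearly independent, i.e., $\Psi\neq 0$, so the set of durations your argument covers contains the paper's. What the paper's choice buys is economy --- $R$ has single-integral entries and its uniqueness lemma is already published, keeping the proof short; what yours buys is self-containedness and a marginally sharper sufficient condition, at the cost of handling the double-integral entries $\mathrm{tr}(W_iW_j)$ and of spelling out the convexity composition argument. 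Both proofs ultimately hinge on the same $\Delta t_1\to 0^+$ degeneration to the identity ($R\approx \Delta t_1 I$ in the paper, $\Psi\approx \Delta t_1^{2n}$ in yours), which is the one genuinely nontrivial verification in either approach.
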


The uniqueness in the case of almost all durations is sufficient for practical purposes.
Thus, from \cref{prop:lti_unique_any}, the use of the controllability scores poses no practical problems when considering LTI systems \cref{eq:lti_base} on networks.
Similarly, for LTV systems, we prove that the optimal solution is unique for almost all tuples of durations $\Delta t$ in \cref{subsec:uniqueness}.

The dependence of the controllability score on the time parameters has so far been examined only partially~\cite{SatoKawamura2025}.
Accordingly, in \cref{subsec:continuity}, we prove its continuity.

In addition, calculating the controllability scores is also an important issue.
A specific algorithm is described in \cref{sec:algorithm}.
The convexity of the objective function, which can be proved in the same manner as the case of LTI systems, is crucial for computation.
\begin{theorem}\label{prop:convexity}
    The objective functions of \cref{prob:vcs,prob:aecs} are convex.
\end{theorem}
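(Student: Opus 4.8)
The plan is to exploit the fact that the Gramian $W(p;\Delta t)$ defined in \cref{eq:p_gramian} depends \emph{linearly} on the variable $p$, and then to combine this linearity with the classical convexity of the two matrix functions $X\mapsto-\log\det X$ and $X\mapsto\mathrm{tr}(X^{-1})$ on the cone of positive definite matrices. Convexity of each objective then follows from the elementary rule that the composition of a convex function with an affine map is convex.

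First I would make the linearity explicit. Writing $\mathrm{diag}(p)=\sum_{i=1}^n p_i\,e_ie_i^\top$ and substituting into \cref{eq:p_gramian} yields
\[
W(p;\Delta t)=\sum_{i=1}^n p_i\,W_i,\qquad
W_i\coloneq\int_0^{t_m}\Phi(t_m,\tau;\Delta t)\,e_ie_i^\top\,\Phi(t_m,\tau;\Delta t)^\top\rd\tau ,
\]
so that, once $\Delta t$ is fixed, the matrices $W_i$ are fixed and $p\mapsto W(p;\Delta t)$ is a linear map from $\RR^n$ into the space of symmetric matrices. In particular, the feasible set $\{p\in\Delta\mid W(p;\Delta t)\succ O\}$ is convex, being the intersection of the convex simplex $\Delta$ with the preimage of the convex cone of positive definite matrices under a linear map; this is the common domain on which both objective functions must be shown to be convex.

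Next I would invoke the convexity of the outer functions on $\{X\succ O\}$. If a self-contained argument is preferred over a citation, one restricts each matrix function to an arbitrary line $X=X_0+sV$ with $X_0\succ O$ and $V$ symmetric, and checks that the second derivative in $s$ is nonnegative. For $-\log\det X$ this follows from the eigenvalue expansion $-\log\det(X_0+sV)=-\log\det X_0-\sum_j\log(1+s\lambda_j)$, where $\lambda_j$ are the eigenvalues of $X_0^{-1/2}VX_0^{-1/2}$, whence the second derivative is $\sum_j \lambda_j^2/(1+s\lambda_j)^2\ge 0$. For $\mathrm{tr}(X^{-1})$, differentiating $\tfrac{\rd}{\rd s}X^{-1}=-X^{-1}VX^{-1}$ twice gives a second derivative equal to $2\,\mathrm{tr}(X^{-1}VX^{-1}VX^{-1})=2\,\mathrm{tr}(Y^2X^{-1})\ge 0$ with $Y\coloneq X^{-1/2}VX^{-1/2}$, since the trace of a product of two positive semidefinite matrices is nonnegative.

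Finally, composing these convex matrix functions with the linear map $p\mapsto W(p;\Delta t)$ established above shows that $p\mapsto-\log\det W(p;\Delta t)$ and $p\mapsto\mathrm{tr}(W(p;\Delta t)^{-1})$ are convex on the (convex) feasible set, which is exactly the assertion. There is no genuine obstacle here, as the authors indicate: the only points requiring minor care are verifying that $W$ is linear in $p$ and that the effective domain $\{W(p;\Delta t)\succ O\}$ is convex, so that the affine-composition rule applies legitimately.
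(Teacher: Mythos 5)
Your proof is correct and is essentially the argument the paper relies on: the paper's proof is just a pointer to \cite[Theorems 1 and 3]{SatoTerasaki2024}, which establish convexity for the LTI case via exactly the structure you exploit, namely that $W(p;\Delta t)=\sum_{i=1}^n p_i W_i$ is affine in $p$ and that $-\log\det X$ and $\mathrm{tr}(X^{-1})$ are convex on the positive definite cone, an argument that carries over verbatim to the LTV Gramian. You have simply written out in full what the paper delegates to the citation, including the standard line-restriction verifications, and all steps (linearity, convexity of the feasible set, and the affine-composition rule) are sound.
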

\begin{proof}
    See \cite[Theorems 1 and 3]{SatoTerasaki2024}. \qed
\end{proof}

It has been shown that convexity allows efficient computation~\cite{SatoTerasaki2024,SatoKawamura2025}.

\subsection{Difference from the existing work}
\label{subsec:difference}

We explain the difference between ours and the GCS~\cite{Mo2025} and why we assume that $p$ is not dependent on $t$ in \cref{eq:ltv_diagonal}.
The GCS is designed for switched systems, where $A_k(t)\equiv A_k \ (k=1,\ldots,m)$.

The first major difference is that \cite{Mo2025} introduces a separate decision variable $p^{(k)}\in\RR^n$ for each time interval $[t_k,t_{k+1}]$, whereas our method employs a single variable $p\in\RR^n$ over the entire time interval $[0,t_m]$.
As described in \cref{subsec:settings}, $p$ is treated as a design variable, and the optimal solution to \cref{prob:vcs} or \ref{prob:aecs} is regarded as the importance of each node.
Thus, if a separate variable $p^{(k)}$ is introduced for each subinterval, or more generally if $p$ is assumed to depend on time $t$, the optimal solution represents the importance of nodes on each subinterval or at each time $t$.
Although it seems to contain rich information, the importance of the nodes over the entire interval remains unclear, and further analysis is necessary.
Additionally, in cases where the number of switching $m$ or the final time of the system $t_m$ is large, the computational cost required to calculate the importance for each node is enormous.

On the other hand, if we assume that $p$ does not depend on $t$, the optimal solution is also not dependent on $t$.
Thus, it can be considered a measure of importance reflecting the temporally global dynamics.
Therefore, we assume that $p$ does not depend on $t$.

\begin{figure}[htbp]
    \centering
    \begin{subfigure}{1.0\linewidth}
        \centering
        \begin{tikzpicture}[scale=0.8]
            \draw[->] (0,0) -- (9,0) node[below] {$t$};
            \draw (0.8,0.12) -- (0.8,-0.12) node[below] {$t_0$};
            \draw (4.5,0.12) -- (4.5,-0.12) node[below] {$t_1$};
            \draw (8.2,0.12) -- (8.2,-0.12) node[below] {$t_2$};

            \begin{scope}[shift={(2.65,2.0)}]
                \node (P1) at (90:1.3) {1};
                \node (P2) at (162:1.3) {2};
                \node (P3) at (234:1.3) {3};
                \node (P4) at (306:1.3) {4};
                \node (P5) at (18:1.3) {5};

                \draw[thick] (P1) circle[radius=0.30];
                \draw[thick] (P2) circle[radius=0.30];
                \draw[thick] (P3) circle[radius=0.30];
                \draw[thick] (P4) circle[radius=0.30];
                \draw[thick] (P5) circle[radius=0.30];

                \draw[-Latex,thick] (P1) -- (P2);
                \draw[-Latex,thick] (P3) -- (P1);
                \draw[-Latex,thick] (P2) -- (P3);
                \draw[-Latex,thick] (P4) -- (P2);
                \draw[-Latex,thick] (P3) -- (P5);
                \draw[-Latex,thick] (P5) -- (P4);
                \draw[-Latex,thick] (P1) -- (P5);
            \end{scope}

            \begin{scope}[shift={(6.35,2.0)}]
                \node (Q1) at (90:1.3) {1};
                \node (Q2) at (162:1.3) {2};
                \node (Q3) at (234:1.3) {3};
                \node (Q4) at (306:1.3) {4};
                \node (Q5) at (18:1.3) {5};
                
                \draw[thick] (Q1) circle[radius=0.30];
                \draw[thick] (Q2) circle[radius=0.30];
                \draw[thick] (Q3) circle[radius=0.30];
                \draw[thick] (Q4) circle[radius=0.30];
                \draw[thick] (Q5) circle[radius=0.30];

                \draw[-Latex,thick] (Q4) -- (Q1);
                \draw[-Latex,thick] (Q1) -- (Q5);
                \draw[-Latex,thick] (Q5) -- (Q2);
                \draw[-Latex,thick] (Q2) -- (Q4);
                \draw[-Latex,thick] (Q3) -- (Q4);
                \draw[-Latex,thick] (Q5) -- (Q3);
            \end{scope}
        \end{tikzpicture}
        \subcaption{A virtual switched system}
        \label{fig:virtual_system}
    \end{subfigure}

    \begin{subfigure}{1.0\linewidth}
        \centering
        \begin{tikzpicture}[scale=0.8]
            \draw[->] (0,0) -- (9,0) node[below] {$t$};
            \draw (0.8,0.12) -- (0.8,-0.12) node[below] {$t_0$};
            \draw (4.5,0.12) -- (4.5,-0.12) node[below] {$t_1$};
            \draw (8.2,0.12) -- (8.2,-0.12) node[below] {$t_2$};

            \begin{scope}[shift={(2.65,2.0)}]
                \node (P1) at (90:1.3) {1};
                \node (P2) at (162:1.3) {2};
                \node (P3) at (234:1.3) {3};
                \node (P4) at (306:1.3) {4};
                \node (P5) at (18:1.3) {5};

                \draw[thick] (P1) circle[radius=0.24];
                \draw[thick] (P2) circle[radius=0.36];
                \draw[thick] (P3) circle[radius=0.26];
                \draw[thick] (P4) circle[radius=0.40];
                \draw[thick] (P5) circle[radius=0.30];
            \end{scope}

            \begin{scope}[shift={(6.35,2.0)}]
                \node (Q1) at (90:1.3) {1};
                \node (Q2) at (162:1.3) {2};
                \node (Q3) at (234:1.3) {3};
                \node (Q4) at (306:1.3) {4};
                \node (Q5) at (18:1.3) {5};
                
                \draw[thick] (Q1) circle[radius=0.40];
                \draw[thick] (Q2) circle[radius=0.22];
                \draw[thick] (Q3) circle[radius=0.34];
                \draw[thick] (Q4) circle[radius=0.26];
                \draw[thick] (Q5) circle[radius=0.44];
            \end{scope}
        \end{tikzpicture}
        \subcaption{The GCS for \cref{fig:virtual_system}}
        \label{fig:virtual_gcs}
    \end{subfigure}

    \begin{subfigure}{1.0\linewidth}
        \centering
        \begin{tikzpicture}[scale=0.8]
            \draw[->] (0,0) -- (9,0) node[below] {$t$};
            \draw (0.8,0.12) -- (0.8,-0.12) node[below] {$t_0$};
            \draw (4.5,0.12) -- (4.5,-0.12) node[below] {$t_1$};
            \draw (8.2,0.12) -- (8.2,-0.12) node[below] {$t_2$};

            \begin{scope}[shift={(4.5,2.0)}]
                \node (P1) at (90:1.3) {1};
                \node (P2) at (162:1.3) {2};
                \node (P3) at (234:1.3) {3};
                \node (P4) at (306:1.3) {4};
                \node (P5) at (18:1.3) {5};

                \draw[thick] (P1) circle[radius=0.30];
                \draw[thick] (P2) circle[radius=0.42];
                \draw[thick] (P3) circle[radius=0.22];
                \draw[thick] (P4) circle[radius=0.38];
                \draw[thick] (P5) circle[radius=0.28];
            \end{scope}
        \end{tikzpicture}
        \subcaption{Our extended controllability score for \cref{fig:virtual_system}}
        \label{fig:virtual_cs}
    \end{subfigure}
    \caption{The difference between the GCS~\cite{Mo2025} and ours}
    \label{fig:difference}
\end{figure}

\Cref{fig:difference} illustrates this difference.
We consider a switched system as in \cref{fig:virtual_system}, where the structure switches at time $t_1$.
In this case, as depicted in \cref{fig:virtual_gcs}, where the radius indicates the importance of the node, the GCS assigns different scores to the networks over the interval $[t_0,t_1)$ and over $[t_1,t_2]$.
In contrast, as shown in \cref{fig:virtual_cs}, the controllability score we extend in this paper assigns the same score to each node over the entire time interval.

The second significant difference is the objective function.
If the volume of the reachable set or the average minimum energy is directly used as the objective function, as in \cite{SatoTerasaki2024}, \cref{prob:vcs} or \ref{prob:aecs} arises as discussed in \cref{subsec:settings}.
In contrast, since \cite{Mo2025} modifies the objective function, it deals with $m$ separate objective functions, each of which coincides with the volume of the reachable set or the average minimum energy for the LTI system on the corresponding subinterval.
Consequently, the method is essentially equivalent to computing the controllability score for the LTI system defined on each time interval, as explained in \cref{subsec:background}.

If the switching order is permuted, the GCS merely changes the ordering of the corresponding $p^{(k)}$ and may fail to detect any essential change in the dynamics.
In contrast, since the controllability Gramian~\cref{eq:p_gramian} incorporates the dynamics over the entire time interval, it can capture changes in dynamics that arise from permutations of the switching order.
Indeed, as shown in the numerical experiments in \cref{subsec:dependence}, our score can exhibit nontrivial changes when the switching order is altered.

\subsection{Uniqueness of controllability scores}
\label{subsec:uniqueness}

We prove that controllability scores are almost always unique.
The main idea is to modify the proof of \cite{SatoKawamura2025} to apply to systems considered here.
Let $D\coloneq\{\Delta t=(\Delta t_1,\ldots,\Delta t_m)\mid 0\leq \Delta t_k\leq s_k\ (k=1,\ldots,m)\}$ be the set of admissible tuples of durations.

The following well-known lemma~\cite{Mityagin2015} plays an essential role in the proof.
\begin{lemma}\label{prop:analytic_zero}
    Let $\varphi(\tau_1,\ldots,\tau_m)$ be a multivariate real-analytic function on some open set $E\subset\RR^m$.
    If $\varphi(\tau_1,\ldots,\tau_m)\not\equiv 0$, then $E'\coloneq\{(\tau_1,\ldots,\tau_m)\in E\mid \varphi(\tau_1,\ldots,\tau_m)\neq0\}$ is a dense open subset of $E$, and the Lebesgue measure of $E\setminus E'$ is $0$.
    Therefore, $\varphi(\tau_1,\ldots,\tau_m)\neq 0$ for almost all $(\tau_1,\ldots,\tau_m)\in E$ with respect to the Lebesgue measure.
\end{lemma}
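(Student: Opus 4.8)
The plan is to reduce all three conclusions to a single measure-theoretic fact: that the zero set $Z := E \setminus E' = \{\tau \in E : \varphi(\tau) = 0\}$ has Lebesgue measure zero. Openness of $E'$ is immediate, since $\varphi$ is continuous and $E'$ is the preimage of the open set $\RR \setminus \{0\}$. Density of $E'$ then follows formally: a subset of $\RR^m$ of measure zero has empty interior, so if $E'$ failed to be dense, its complement $Z$ would contain a nonempty open subset of $E$ and hence have positive measure, a contradiction. I would carry out the argument under the standing assumption that $E$ is connected, which is the only case needed here because the relevant domain is the interior of the box $D$, which is connected (on a disconnected $E$ the hypothesis $\varphi \not\equiv 0$ would have to be strengthened to forbid $\varphi$ vanishing on an entire component). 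I then prove $\lambda_m(Z) = 0$ by induction on the dimension $m$, where $\lambda_m$ denotes $m$-dimensional Lebesgue measure.

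For the base case $m = 1$, I invoke the real-analytic identity theorem: the set of points at which $\varphi$ and all its derivatives vanish is open (by the convergent Taylor expansion) and closed (by continuity of the derivatives), hence equals $E$ or is empty; since $\varphi \not\equiv 0$ it is empty, so at every zero some derivative is nonzero and the zeros are therefore isolated. An isolated set in $\RR$ is countable, so $\lambda_1(Z) = 0$. For the inductive step it suffices to prove $\lambda_m(Z \cap Q) = 0$ for each closed box $Q = \prod_{i=1}^m [a_i,b_i] \subset E$, since $E$ is covered by countably many such boxes. On $Q$, I slice in the last coordinate: writing $\tau = (\tau', \tau_m)$ and $Z_{\tau'} := \{\tau_m : (\tau',\tau_m) \in Z\}$, Tonelli's theorem (applicable because $Z$ is closed, hence measurable) gives $\lambda_m(Z \cap Q) = \int \lambda_1(Z_{\tau'}) \, d\tau'$. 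For each fixed $\tau'$ the map $\tau_m \mapsto \varphi(\tau',\tau_m)$ is real-analytic on the interval slice; if it is not identically zero its zeros are isolated and $\lambda_1(Z_{\tau'}) = 0$ by the one-variable case, so the integrand is supported on the bad set $B := \{\tau' : \varphi(\tau',\cdot) \equiv 0 \text{ on the slice}\}$, and $\lambda_m(Z\cap Q) \le (b_m - a_m)\,\lambda_{m-1}(B)$.

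The main obstacle, and the heart of the induction, is to show $\lambda_{m-1}(B) = 0$. Fixing a reference value $c \in (a_m,b_m)$, every $\tau' \in B$ satisfies $\partial_{\tau_m}^k \varphi(\tau', c) = 0$ for all $k \ge 0$, so $B \subseteq \bigcap_{k \ge 0} \{\tau' : g_k(\tau') = 0\}$, where $g_k(\tau') := \partial_{\tau_m}^k \varphi(\tau', c)$ is real-analytic in $\tau'$. If every $g_k$ were identically zero near the center of $Q$, the Taylor expansion of $\varphi$ in $\tau_m$ about $c$ would vanish, forcing $\varphi \equiv 0$ on a neighborhood and hence, by the identity theorem and connectedness of $E$, on all of $E$ — contradicting $\varphi \not\equiv 0$. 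Thus some $g_{k_0} \not\equiv 0$, and the induction hypothesis applied to this $(m-1)$-variable analytic function yields $\lambda_{m-1}(\{g_{k_0} = 0\}) = 0$; since $B \subseteq \{g_{k_0} = 0\}$, we obtain $\lambda_{m-1}(B) = 0$ and therefore $\lambda_m(Z \cap Q) = 0$. Summing over the countable cover gives $\lambda_m(Z) = 0$, which closes the induction and, by the reduction in the first paragraph, delivers all three assertions of the lemma.
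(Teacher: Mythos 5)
Your proof is correct, but note that the paper does not actually prove this lemma: it is quoted as a well-known result with a citation to \cite{Mityagin2015}, and the uniqueness theorem simply invokes it. What you have reconstructed is, in substance, the standard argument from that literature --- induction on $m$, isolated zeros via the identity theorem in dimension one, Tonelli slicing in the last coordinate, and control of the bad set $B$ of slices on which $\varphi$ vanishes identically through the derivative functions $g_k(\tau')=\partial_{\tau_m}^k\varphi(\tau',c)$, at least one of which must be nonvanishing lest $\varphi\equiv 0$ on all of $E$. Two remarks. First, your insistence on connectedness of $E$ is not pedantry but a genuine correction: as stated for an arbitrary open $E$ the lemma is false (take $E$ to be two disjoint balls with $\varphi=0$ on one and $\varphi=1$ on the other; then $\varphi\not\equiv 0$, yet $E\setminus E'$ has positive measure and $E'$ is not dense). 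The paper's application is unaffected, since there $E=\mathrm{int}\,D$ is the interior of a box and hence connected --- exactly the observation you make. Second, two small gaps you should close for completeness: the inductive hypothesis must be applied to $g_{k_0}$ on the connected component of $\{\tau' : (\tau',c)\in E\}$ containing $Q'$ (that open set need not be connected), and likewise the one-variable case must be applied on the connected component of the slice $\{\tau_m : (\tau',\tau_m)\in E\}$ containing $[a_m,b_m]$; both repairs are immediate because $Q'$ and $[a_m,b_m]$ are connected sets.
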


Moreover, the following lemmas are also crucial.
\begin{lemma}\label{prop:uniqueness_lemma}
    Let $\Delta t\in D$ be fixed.
    If $R(\Delta t)$ is regular, then for $\Delta t$, \cref{prob:vcs,prob:aecs} each admit a unique optimal solution, where $R(\Delta t)\in\RR^{n\times n}$ and its $(i,j)$-th entry is given by
    \begin{equation}\label{eq:R_matrix}
        R(\Delta t)_{ij}\coloneq \int_0^{t_m} \left\{e_i^\top\Phi(t_m,\tau;\Delta t)e_j\right\}^2\rd\tau.
    \end{equation}
\end{lemma}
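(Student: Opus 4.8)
The plan is to exploit the fact that $W(p;\Delta t)$ depends \emph{linearly} on $p$. Writing $\mathrm{diag}(p)=\sum_{i=1}^n p_i e_i e_i^\top$ and substituting into \cref{eq:p_gramian} yields the decomposition
\begin{equation*}
    W(p;\Delta t)=\sum_{i=1}^n p_i W_i,\qquad W_i\coloneq\int_0^{t_m}\bigl(\Phi(t_m,\tau;\Delta t)e_i\bigr)\bigl(\Phi(t_m,\tau;\Delta t)e_i\bigr)^\top\rd\tau,
\end{equation*}
where each $W_i$ is symmetric positive semidefinite and independent of $p$; hence $p\mapsto W(p;\Delta t)$ is linear. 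Since existence is already guaranteed by \cref{prop:existence}, it suffices to rule out two distinct minimizers, and I would split this into two sub-claims: (a) any two optimizers yield the same Gramian, and (b) the linear map $p\mapsto W(p;\Delta t)$ is injective whenever $R(\Delta t)$ is regular.

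For (a), I would use that $W\mapsto-\log\det W$ and $W\mapsto\mathrm{tr}(W^{-1})$ are \emph{strictly} convex on the cone of positive-definite matrices (the former is standard; the latter can be confirmed by computing the Hessian, or via strict operator convexity of the matrix inverse, using that a positive-semidefinite matrix of zero trace vanishes). Suppose $p^{(1)}\neq p^{(2)}$ are both optimal and set $q\coloneq\tfrac12(p^{(1)}+p^{(2)})$. Feasibility of $q$ is immediate because $W(q;\Delta t)=\tfrac12\bigl(W(p^{(1)};\Delta t)+W(p^{(2)};\Delta t)\bigr)\succ O$. Convexity forces the objective at $q$ to lie at or below the average of the two optimal values, while optimality forces it to lie at or above; equality therefore holds in the convexity inequality, and strict convexity then yields $W(p^{(1)};\Delta t)=W(p^{(2)};\Delta t)$.

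For (b), which is the crux, I would link $R(\Delta t)$ to the matrices $W_i$ by reading off their diagonal entries. Denoting the $(i,j)$ entry of the state-transition matrix by $\Phi(t_m,\tau;\Delta t)_{ij}$, one has $(W_i)_{jj}=\int_0^{t_m}\Phi(t_m,\tau;\Delta t)_{ji}^2\,\rd\tau=R(\Delta t)_{ji}$; that is, the diagonal of $W_i$ is exactly the $i$-th column of $R(\Delta t)$. Now if $W(p^{(1)};\Delta t)=W(p^{(2)};\Delta t)$, then $d\coloneq p^{(1)}-p^{(2)}$ satisfies $\sum_{i=1}^n d_i W_i=O$; extracting the $(j,j)$ diagonal entry of this matrix identity gives $\sum_{i=1}^n d_i R(\Delta t)_{ji}=\bigl(R(\Delta t)d\bigr)_j=0$ for every $j$, i.e. $R(\Delta t)d=0$. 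Regularity of $R(\Delta t)$ then forces $d=0$, so $p^{(1)}=p^{(2)}$, settling both \cref{prob:vcs,prob:aecs} simultaneously.

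I expect the main obstacle to be purely at the level of cleanly justifying the strict-convexity claims, in particular strict convexity of $\mathrm{tr}(W^{-1})$, since the algebraic heart of the argument—the diagonal-extraction identity $(W_i)_{jj}=R(\Delta t)_{ji}$ that converts the matrix equation $\sum_i d_i W_i=O$ into the vector equation $R(\Delta t)d=0$—is short once the linear decomposition of $W(p;\Delta t)$ is recorded.
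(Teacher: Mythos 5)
Your proposal is correct and follows essentially the same route as the paper, whose proof of this lemma is simply a citation of \cite{SatoKawamura2025} together with the remark that regularity of $R(\Delta t)$ is a sufficient condition for strict convexity of the objectives on the feasible region. Your two-step argument---strict convexity of $-\log\det W$ and $\mathrm{tr}(W^{-1})$ on the positive-definite cone forces any two optimizers to share the same Gramian, and the diagonal identity $(W_i)_{jj}=R(\Delta t)_{ji}$ turns $\sum_i d_i W_i=O$ into $R(\Delta t)d=0$, so regularity makes $p\mapsto W(p;\Delta t)$ injective---is exactly that mechanism, written out in full.
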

\begin{proof}
    See \cite[Theorem 1]{SatoKawamura2025}. \qed
\end{proof}

\begin{lemma}\label{prop:analyticity}
    All the entries of the matrix-valued function $R(\Delta t)$ are real analytic on $\mathrm{int}\,D$, where $\mathrm{int}\,D$ denotes the interior of $D$.
\end{lemma}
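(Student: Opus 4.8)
The plan is to reduce the analyticity of $R(\Delta t)_{ij}$ to the real analyticity of each state transition matrix $\Phi_k$ in its arguments, and then to a general principle that integrating a parameter-analytic family over a fixed domain preserves analyticity, with the variable limits of integration absorbed by a change of variables. \emph{First}, I would establish that each $\Phi_k(t',\tau')$ is real analytic. Since $A_k(t')$ is real analytic on $(0,s_k)$, it extends holomorphically to a complex neighborhood of $(0,s_k)$; by the standard theorem on holomorphic dependence of solutions of a linear ODE on the time variable and on the initial time, the solution of $\partial_{t'}\Phi_k(t',\tau')=A_k(t')\Phi_k(t',\tau')$, $\Phi_k(\tau',\tau')=I$, is jointly real analytic in $(t',\tau')$ on $(0,s_k)\times(0,s_k)$. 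Combined with the cocycle identity $\Phi_k(t',\tau')=\Phi_k(t',c)\Phi_k(c,\tau')$ and the relation $\Phi_k(c,\tau')=\Phi_k(\tau',c)^{-1}$ (using that the inverse of an invertible real-analytic matrix-valued function is again real analytic), this yields in particular that $\Delta t_k\mapsto\Phi_k(\Delta t_k,0)$ is real analytic on $(0,s_k)$, which is exactly the regime carved out by $\mathrm{int}\,D$.

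\emph{Second}, I would exploit the factorization \cref{eq:transition}. Splitting the integral \cref{eq:R_matrix} over $[0,t_m]$ into the subintervals $[t_{\ell-1},t_\ell]$ and applying \cref{eq:transition} with $k=m$ on each, then substituting $\sigma=\tau-t_{\ell-1}$ followed by $\sigma=\Delta t_\ell\, s$ to normalize the domain to $[0,1]$, gives $R(\Delta t)_{ij}=\sum_{\ell=1}^m g_\ell(\Delta t)$ with
\begin{equation*}
    g_\ell(\Delta t)=\Delta t_\ell\int_0^1\bigl\{e_i^\top\Phi_m(\Delta t_m,0)\cdots\Phi_{\ell+1}(\Delta t_{\ell+1},0)\Phi_\ell(\Delta t_\ell,\Delta t_\ell s)e_j\bigr\}^2\,\rd s.
\end{equation*}
The integrand now depends on $\Delta t$ only through the analytic factors from the first step (note that $\Delta t_\ell$ enters $\Phi_\ell$ through both arguments, but as a composition of analytic maps this is harmless), while the domain $[0,1]$ is fixed.

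\emph{Third}, to pass analyticity through the integral I would complexify: extending $\Delta t$ to a complex vector $z$ in a neighborhood $U$ of any fixed point of $\mathrm{int}\,D$, the integrand becomes, for each fixed $s$, holomorphic in $z$ and, for $s$ bounded away from $0$, jointly continuous in $(s,z)$; Morera's theorem together with Fubini's theorem then makes $z\mapsto\Delta t_\ell\int_\epsilon^1(\cdots)\,\rd s$ holomorphic on $U$. Letting $\epsilon\to0$, these functions converge locally uniformly---because $\Phi$, and hence the integrand, is continuous and therefore bounded on compacta, so the tail over $[0,\epsilon]$ contributes $O(\epsilon)$ uniformly in $z$ on compact subsets of $U$---and a locally uniform limit of holomorphic functions is holomorphic. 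Restricting to real $z$ shows that each $g_\ell$, and thus $R(\Delta t)_{ij}$, is real analytic on $\mathrm{int}\,D$.

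\emph{The main obstacle} I anticipate is precisely this last analyticity-under-the-integral step in conjunction with the moving endpoint: $\Delta t_\ell$ appears simultaneously as an integration limit and as a parameter inside $\Phi_\ell$, and the lower limit $\sigma=0$ touches the boundary where $A_\ell$ is only continuous, not analytic. The normalization to $[0,1]$ handles the moving endpoint, and the $\epsilon$-truncation with local uniform convergence bypasses the boundary-regularity issue; an equivalent route would be to verify Cauchy-type derivative bounds $|\partial^\alpha g_\ell|\le C\,\alpha!/r^{|\alpha|}$ directly by differentiating under the integral, using the analytic-dependence (Cauchy) estimates for the ODE to bound the $\Delta t$-derivatives of $\Phi_k$ uniformly in $s$.
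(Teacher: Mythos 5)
Your Steps 1 and 2 are sound and run parallel to the paper: the paper likewise splits the integral in \cref{eq:R_matrix} over the subintervals, uses \cref{eq:transition}, and invokes the real analyticity of each $\Phi_k$ in its first argument (\cref{prop:transition_regularity}). The genuine gap is in Step 3. After your normalization the integrand contains $\Phi_\ell(z_\ell,z_\ell s)$, and both your Morera/Fubini step and your uniform $O(\epsilon)$ tail bound require this to be holomorphic (and bounded) in $z$ on a \emph{fixed} complex neighborhood $U$ of the real point, for \emph{every} $s\in(0,1]$. That is false in general. The matrix $A_\ell$ is real analytic only on the open interval $(0,s_\ell)$ and merely continuous at $0$, so its holomorphic extension lives on a complex neighborhood of $(0,s_\ell)$ that may pinch arbitrarily fast at the endpoint: for instance, $A_\ell$ may have poles at the points $1/k+\mathrm{i}/k^2$ (together with their conjugates), which accumulate at $0$ tangentially to the real axis while $A_\ell$ remains analytic on $(0,s_\ell)$ and continuous on $[0,s_\ell]$. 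For any non-real $z_\ell$, the ray $\{z_\ell s : s\in(0,1]\}$ eventually leaves such a pinched domain; worse, any fixed neighborhood $U$ of $\overline{\Delta t}_\ell$ contains points of the form $(1/k+\mathrm{i}/k^2)/s$ with $s\in(0,1]$, i.e., points at which the integrand for that value of $s$ is singular. Hence the truncated integrals $\int_\epsilon^1$ are holomorphic only on $\epsilon$-dependent neighborhoods $U_\epsilon$ that collapse onto the real axis as $\epsilon\to 0$, and the ``locally uniform limit of holomorphic functions'' argument has no common open domain on which to operate. Your fallback via uniform Cauchy estimates fails for the same reason: differentiating under the integral in the normalized form produces terms like $s\,g'(\Delta t_\ell s)$ with $|g'(\tau)|\sim 1/\tau^{2}$ near the pinched corner, which are not uniformly integrable in $s$; the divergences cancel only after undoing the normalization.

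The paper sidesteps the corner entirely by a separation-of-variables step that your proof is missing: using the cocycle identity to write $\Phi_\ell(\Delta t_\ell,\tau)=\Phi_\ell(\Delta t_\ell,0)\,\Phi_\ell(\tau,0)^{-1}$, the integrand splits entrywise into a finite sum $\sum_{r} f_r(\Delta t)\,g_r(\tau)$, where each $f_r$ is real analytic on $\mathrm{int}\,D$ and each $g_r$ is real analytic on $(0,s_\ell)$ and continuous at $0$. The $\Delta t$-dependence is then pulled out of the integral, and what remains is $\int_0^{\Delta t_\ell} g_r(\tau)\,\rd\tau$, which is analytic in its upper limit by the fundamental theorem of calculus; the lower limit $0$, where analyticity of $g_r$ fails, needs only continuity for the integral to make sense. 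If you insert this factorization into your Step 2 (which also renders the normalization to $[0,1]$ unnecessary), the rest of your argument becomes the paper's proof.
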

\begin{proof}
    See Appendix \ref{sec:proof_analyticity}. \qed
\end{proof}

The regularity of $R(\Delta t)$ is a sufficient condition that the objective functions of \cref{prob:vcs,prob:aecs} are strictly convex on the feasible region, which guarantees the uniqueness of the optimal solution.
From \cref{prop:uniqueness_lemma,prop:analyticity}, we can prove the following theorem.

\begin{theorem}\label{prop:uniqueness}
    Let
    \begin{multline*}
        \widetilde{D}_{\mathrm{V}}\coloneq\{\Delta t\in D\mid \text{For $\Delta t$, \Cref{prob:vcs}} \\
        \text{admits a unique optimal solution.}\},
    \end{multline*}
    and
    \begin{multline*}
        \widetilde{D}_{\mathrm{A}}\coloneq\{\Delta t\in D\mid \text{For $\Delta t$, \Cref{prob:aecs}} \\
        \text{admits a unique optimal solution.}\}.
    \end{multline*}
    Then, both $\widetilde{D}_{\mathrm{V}}$ and $\widetilde{D}_{\mathrm{A}}$ contain a dense open subset of $D$.
    Furthermore, both $D\setminus \widetilde{D}_{\mathrm{V}}$ and $D\setminus \widetilde{D}_{\mathrm{A}}$ have Lebesgue measure zero; therefore, \cref{prob:vcs,prob:aecs} each admit a unique optimal solution for almost all $\Delta t\in D$.
\end{theorem}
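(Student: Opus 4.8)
The plan is to reduce the whole statement to the behaviour of the single scalar function $\Delta t\mapsto\det R(\Delta t)$ on $\mathrm{int}\,D$ and then to invoke \cref{prop:analytic_zero}. By \cref{prop:analyticity} every entry of $R(\Delta t)$ is real analytic on $\mathrm{int}\,D$, and since the determinant is a polynomial in the matrix entries, $\det R(\Delta t)$ is real analytic on $\mathrm{int}\,D$ as well.

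The crux, which I expect to be the main obstacle, is to show that $\det R(\Delta t)\not\equiv 0$ on $\mathrm{int}\,D$, i.e.\ to exhibit at least one admissible tuple of durations at which $R$ is regular. I would establish this by a small-time estimate. Because each $A_k$ is continuous on the compact interval $[0,s_k]$, the matrix $A(t;\Delta t)$ is bounded by a constant $M$ uniformly over $D$, so a Gr\"onwall estimate gives $\norm{\Phi(t_m,\tau;\Delta t)-I}\to 0$ uniformly in $\tau\in[0,t_m]$ as $t_m\to 0$. Hence $e_i^\top\Phi(t_m,\tau;\Delta t)e_j=\delta_{ij}+O(t_m)$, and substituting into \cref{eq:R_matrix} yields $R(\Delta t)_{ii}=t_m\bigl(1+O(t_m)\bigr)$ together with $R(\Delta t)_{ij}=O(t_m^3)$ for $i\neq j$. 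Consequently $\det R(\Delta t)=t_m^{\,n}\bigl(1+O(t_m)\bigr)$, which is nonzero for every $\Delta t$ with $0<\Delta t_k<s_k$ and $t_m=\sum_k\Delta t_k$ sufficiently small. Such $\Delta t$ lie in $\mathrm{int}\,D$, so $\det R\not\equiv 0$ there.

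With this in hand, \cref{prop:analytic_zero} applies to $\varphi=\det R$ on $E=\mathrm{int}\,D$: the set $E'\coloneq\{\Delta t\in\mathrm{int}\,D\mid\det R(\Delta t)\neq 0\}$ is dense and open in $\mathrm{int}\,D$, and $\mathrm{int}\,D\setminus E'$ has Lebesgue measure zero. By \cref{prop:uniqueness_lemma}, regularity of $R(\Delta t)$ forces both \cref{prob:vcs,prob:aecs} to admit a unique optimal solution, so $E'\subseteq\widetilde{D}_{\mathrm{V}}\cap\widetilde{D}_{\mathrm{A}}$.

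It remains to upgrade the conclusions from $\mathrm{int}\,D$ to $D$. Since $E'$ is open in $\RR^m$ it is open in $D$; and since $\mathrm{int}\,D$ is dense in the box $D$ while $E'$ is dense in $\mathrm{int}\,D$, the set $E'$ is dense in $D$, so each of $\widetilde{D}_{\mathrm{V}}$ and $\widetilde{D}_{\mathrm{A}}$ contains the dense open subset $E'$ of $D$. For the measure claim, $D\setminus\widetilde{D}_{\mathrm{V}}\subseteq(D\setminus\mathrm{int}\,D)\cup(\mathrm{int}\,D\setminus E')$, where the boundary $D\setminus\mathrm{int}\,D$ of the box has measure zero and the second set has measure zero by the above; hence $D\setminus\widetilde{D}_{\mathrm{V}}$ has measure zero, and the same argument applies to $\widetilde{D}_{\mathrm{A}}$. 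The almost-everywhere uniqueness then follows.
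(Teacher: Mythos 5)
Your proof is correct, and its skeleton is the same as the paper's: real analyticity of $\varphi=\det R$ on $\mathrm{int}\,D$ via \cref{prop:analyticity}, a nonvanishing argument, \cref{prop:analytic_zero} to obtain a dense open full-measure subset, \cref{prop:uniqueness_lemma} to place that subset inside $\widetilde{D}_{\mathrm{V}}\cap\widetilde{D}_{\mathrm{A}}$, and the boundary-has-measure-zero bookkeeping to pass from $\mathrm{int}\,D$ to $D$. Where you genuinely differ is at the crux you flagged, namely $\det R\not\equiv 0$. The paper restricts to the boundary ray $\Delta t=(\Delta t_1,0,\ldots,0)$, where \cref{eq:transition} reduces the transition matrix to $\Phi_1(\Delta t_1,\tau)$, differentiates $R(\Delta t_1,0,\ldots,0)_{ij}$ in $\Delta t_1$ at $0$ to obtain $R(\Delta t_1,0,\ldots,0)=(\Delta t_1)I+o(\Delta t_1)$, concludes regularity for small $\Delta t_1>0$, and then needs one extra continuity step to transfer nonvanishing from these boundary points into $\mathrm{int}\,D$. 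You instead work directly at interior points with every $\Delta t_k>0$ and $t_m$ small, using the uniform bound on $A(\cdot\,;\Delta t)$ (continuity of each $A_k$ on the compact $[0,s_k]$) together with Gr\"onwall to get $\Phi(t_m,\tau;\Delta t)=I+O(t_m)$ uniformly in $\tau$, hence $R(\Delta t)=t_m\bigl(I+O(t_m)\bigr)$ and $\det R(\Delta t)=t_m^n\bigl(1+O(t_m)\bigr)\neq 0$. Both are small-time arguments identifying $R\approx t_m I$; yours never leaves $\mathrm{int}\,D$, so the paper's final continuity step becomes unnecessary, at the price of a Gr\"onwall estimate in place of a one-line differentiation under the integral. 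Your explicit treatment of the topology and measure of $D\setminus\mathrm{int}\,D$ spells out what the paper leaves implicit and is correct.
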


\begin{proof}
    By \cref{prop:analyticity}, $\varphi(\Delta t)\coloneq \det R(\Delta t) \ (\Delta t\in D)$ is real analytic on $\mathrm{int}\,D$.
    We will prove that $\varphi(\Delta t)\not\equiv0$ on $\mathrm{int}\,D$.
    Then, by letting $D'\coloneq \{\Delta t\in D\mid \varphi(\Delta t)\neq0\}$, it follows from \cref{prop:analytic_zero} that $D'\cap \mathrm{int}\,D$ is a dense open subset of $\mathrm{int}\,D$, and $\mathrm{int}\,D\setminus(D'\cap\mathrm{int}\,D)$ has Lebesgue measure zero.
    Thus, $D'$ is a dense open subset of $D$, and $D\setminus D'$ has Lebesgue measure zero.
    Furthermore, it follows from \cref{prop:uniqueness_lemma} that $D'\subset \widetilde{D}_{\mathrm{V}}$ and $D'\subset\widetilde{D}_{\mathrm{A}}$ hold, from which the claim holds.

    If follows from \cref{eq:transition} that $\Phi(t_m,\tau;\Delta t)=\Phi_1(\Delta t_1,\tau)$ for $0\leq\tau\leq\Delta t_1$ when $\Delta t=(\Delta t_1,0,\ldots,0)$.
    Thus,
    \begin{equation*}
        R(\Delta t_1,0,\ldots,0)_{ij}=\int_{0}^{\Delta t_1} \left\{e_i^\top\Phi_1(\Delta t_1,\tau)e_j\right\}^2\rd\tau
    \end{equation*}
    holds, and differentiating it with respect to $\Delta t_1$ yields
    \begin{multline*}
        \dfrac{\partial}{\partial \Delta t_1}R(\Delta t_1,0,\ldots,0)_{ij} \\
        =\{e_i^\top e_j\}^2+\int_{0}^{\Delta t_1} \dfrac{\partial}{\partial\Delta t_1}\left\{e_i^\top\Phi_1(\Delta t_1,\tau)e_j\right\}^2\rd\tau.
    \end{multline*}
    By substituting $\Delta t_1=0$, we obtain
    \begin{equation*}
        \left.\dfrac{\partial}{\partial \Delta t_1}R(\Delta t_1,0,\ldots,0)_{ij}\right\vert_{\Delta t_1=0}=e_i^\top e_j.
    \end{equation*}
    Therefore,
    \begin{equation*}
        R(\Delta t_1,0,\ldots,0)=(\Delta t_1)I+o(\Delta t_1)
    \end{equation*}
    holds, and hence $R(\Delta t_1,0,\ldots,0)$ is regular for sufficiently small $\Delta t_1>0$.
    This implies that $\varphi(\Delta t)\not\equiv0$ on $D$, and by continuity, $\varphi(\Delta t)\not\equiv0$ also on $\mathrm{int}\,D$, which completes the proof. \qed
\end{proof}

Note that ``for almost all $\Delta t\in D$'' in Theorem \ref{prop:uniqueness} cannot be replaced by ``all $\Delta t\in D$''.
In fact, when $m=1$, there exist a system matrix and a time parameter for which the optimal solution is not unique~\cite{SatoTerasaki2024}.

\subsection{Continuity of controllability scores}
\label{subsec:continuity}

We prove that the controllability scores are continuous with respect to $\Delta t$.
Let $p^*_{\mathrm{V}}(\Delta t)$ and $p^*_{\mathrm{A}}(\Delta t)$ denote the optimal solution to \cref{prob:vcs} for $\Delta t\in\widetilde{D}_{\mathrm{V}}$ and the optimal solution to \cref{prob:aecs} for $\Delta t\in\widetilde{D}_{\mathrm{A}}$, respectively.

Let $f\colon\RR^n\times\RR^m\to\RR\cup\{\infty\}$.
We say that $f$ is proper if there exists at least one pair $(p,\Delta t)$ such that $f(p,\Delta t)<\infty$.
It is lower semicontinuous if for every $\alpha\in\RR$, the sublevel set $\mathrm{lev}_{\leq\alpha}f\coloneq \{(p,\Delta t)\in\RR^n\times\RR^m\mid f(p,\Delta t)\leq\alpha\}$ is closed.
It is level-bounded in $p$ locally uniformly in $\Delta t$ if for every $\overline{\Delta t}$ and $\alpha\in\RR$, there exists a neighborhood $V\subset\RR^m$ of $\overline{\Delta t}$ along with a bounded set $B\subset\RR^n$ such that $\{p\in\RR^n\mid f(p,\Delta t)\leq\alpha\}\subset B$ for all $\Delta t\in V$.

The following proposition~\cite[Theorem~1.17]{RockafellarWets1998} is essential in the proof.
\begin{proposition}\label{prop:continuity_lemma}
    Let $f\colon\RR^n\times\RR^m\to\RR\cup\{\infty\}$ be proper, lower semicontinuous, and level-bounded in $p$ locally uniformly in $\Delta t$.
    Consider
    \begin{equation*}
        P(\Delta t)\coloneq\argmin_{p\in\RR^n}f(p,\Delta t),
    \end{equation*}
    where $P(\Delta t)=\emptyset$ when $f(p,\Delta t)=\infty$ for every $p\in\RR^n$.
    Assume that $p^{(k)}\in P(\Delta t^{(k)})$ and $\Delta t^{(k)}\to\overline{\Delta t}$, and that there exists $\overline{p}\in P(\overline{\Delta t})$ such that $f(\overline{p},\Delta t)$ is continuous in $\Delta t$ at $\overline{\Delta t}$.
    Then the sequence $\{p^{(k)}\}_{k\in\NN}$ is bounded, and all its cluster points lie in $P(\overline{\Delta t})$.
    In particular, when $P(\overline{\Delta t})=\{\overline{p}\}$, the sequence $\{p^{(k)}\}_{k\in\NN}$ converges to $\overline{p}$.
\end{proposition}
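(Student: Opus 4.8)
The plan is to prove the three assertions in turn, relying only on the definitions of properness, lower semicontinuity, and local uniform level-boundedness, together with the hypothesis $\overline{p}\in P(\overline{\Delta t})$. First I would fix $\overline{\alpha}\coloneq f(\overline{p},\overline{\Delta t})$ and observe that $\overline{\alpha}$ is finite: since $P(\overline{\Delta t})\neq\emptyset$ by the assumption $\overline{p}\in P(\overline{\Delta t})$, while $P(\overline{\Delta t})=\emptyset$ whenever $f(\cdot,\overline{\Delta t})\equiv\infty$, the value $\overline{\alpha}$ cannot be $\infty$. By definition of $P$, the number $\overline{\alpha}=\min_{p}f(p,\overline{\Delta t})$ is the optimal value at $\overline{\Delta t}$.

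The first step is to establish boundedness of $\{p^{(k)}\}_{k\in\NN}$. By the assumed continuity of $f(\overline{p},\cdot)$ at $\overline{\Delta t}$, we have $f(\overline{p},\Delta t^{(k)})\to\overline{\alpha}$, so there is $K$ with $f(\overline{p},\Delta t^{(k)})\leq\overline{\alpha}+1$ for all $k\geq K$. Since $p^{(k)}\in P(\Delta t^{(k)})$ minimizes $f(\cdot,\Delta t^{(k)})$, it follows that $f(p^{(k)},\Delta t^{(k)})\leq f(\overline{p},\Delta t^{(k)})\leq\overline{\alpha}+1$ for $k\geq K$. Applying local uniform level-boundedness at $\overline{\Delta t}$ with the level $\alpha=\overline{\alpha}+1$ furnishes a neighborhood $V$ of $\overline{\Delta t}$ and a bounded set $B$ with $\{p\mid f(p,\Delta t)\leq\overline{\alpha}+1\}\subset B$ for all $\Delta t\in V$; since $\Delta t^{(k)}\to\overline{\Delta t}$, we get $p^{(k)}\in B$ for all large $k$, whence the sequence is bounded.

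Next I would show that every cluster point lies in $P(\overline{\Delta t})$. Let $\hat{p}$ be a cluster point along a subsequence $p^{(k_j)}\to\hat{p}$, so that $(p^{(k_j)},\Delta t^{(k_j)})\to(\hat{p},\overline{\Delta t})$. Lower semicontinuity of $f$ then gives $f(\hat{p},\overline{\Delta t})\leq\liminf_{j}f(p^{(k_j)},\Delta t^{(k_j)})$. On the other hand, the minimizer property together with continuity yields $f(p^{(k_j)},\Delta t^{(k_j)})\leq f(\overline{p},\Delta t^{(k_j)})\to\overline{\alpha}$, so that the $\liminf$ is at most $\overline{\alpha}$. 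Combining the two bounds gives $f(\hat{p},\overline{\Delta t})\leq\overline{\alpha}=\min_{p}f(p,\overline{\Delta t})$, which forces equality and hence $\hat{p}\in P(\overline{\Delta t})$. Finally, when $P(\overline{\Delta t})=\{\overline{p}\}$, a bounded sequence all of whose cluster points coincide with $\overline{p}$ must converge to $\overline{p}$: otherwise some subsequence would stay outside a ball around $\overline{p}$, and by the Bolzano--Weierstrass theorem it would admit a further convergent subsequence whose limit is a cluster point different from $\overline{p}$, contradicting the previous step.

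The argument is essentially routine, so I expect the only delicate points to be bookkeeping ones: verifying that $\overline{\alpha}$ is finite so that the level-boundedness hypothesis can legitimately be invoked, and being careful that lower semicontinuity is applied in the joint variable $(p,\Delta t)$ rather than in $p$ alone, since it is the joint convergence $(p^{(k_j)},\Delta t^{(k_j)})\to(\hat{p},\overline{\Delta t})$ that drives the key inequality. The continuity hypothesis on $f(\overline{p},\cdot)$ at $\overline{\Delta t}$ is what couples the two variables and supplies the upper bound $\overline{\alpha}$ in both the boundedness and the cluster-point steps; without it one could only bound the limit by a possibly larger quantity, so I would take care to isolate exactly where that hypothesis enters.
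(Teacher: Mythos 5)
Your proof is correct, but note that there is no internal proof in the paper to compare it against: the paper states this proposition as a quotation of Theorem~1.17 of \cite{RockafellarWets1998} (Rockafellar--Wets, parametric minimization) and never proves it. What you have produced is a self-contained, elementary reconstruction of the argument behind that citation, specialized to the paper's hypotheses, and your three steps are exactly the right ingredients: finiteness of $\overline{\alpha}\coloneq f(\overline{p},\overline{\Delta t})$ (needed before level-boundedness can be invoked); boundedness of $\{p^{(k)}\}$ from the minimizer inequality $f(p^{(k)},\Delta t^{(k)})\le f(\overline{p},\Delta t^{(k)})$ combined with local uniform level-boundedness at level $\overline{\alpha}+1$; and membership of every cluster point $\hat{p}$ in $P(\overline{\Delta t})$ by playing \emph{joint} lower semicontinuity against the upper bound $\limsup_j f(p^{(k_j)},\Delta t^{(k_j)})\le\overline{\alpha}$. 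Your argument has an added value: the cited theorem is phrased in terms of convergence of the optimal values along the sequence $\Delta t^{(k)}$, whereas the paper's statement assumes instead continuity of $f(\overline{p},\cdot)$ at $\overline{\Delta t}$ for a single minimizer $\overline{p}$; your direct proof confirms that this variant of the hypothesis indeed suffices, a compatibility check the paper itself omits. One minor point of rigor: the paper defines lower semicontinuity by closedness of the sublevel sets $\mathrm{lev}_{\le\alpha}f$, while you use the sequential form $f(\hat{p},\overline{\Delta t})\le\liminf_j f(p^{(k_j)},\Delta t^{(k_j)})$; the two are equivalent (for a finite liminf, tails of the subsequence lie in the closed set $\mathrm{lev}_{\le c+\varepsilon}f$, so the limit does too, for every $\varepsilon>0$), but since you lean on the paper's stated definition, a one-line justification of this equivalence would make the proof airtight.
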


We can prove the following theorem.
\begin{theorem}\label{prop:continuity}
    The controllability scores $p^*_{\mathrm{V}}(\Delta t)$ and $p^*_{\mathrm{A}}(\Delta t)$ are continuous on $\widetilde{D}_{\mathrm{V}}$ and $\widetilde{D}_{\mathrm{A}}$, respectively.
\end{theorem}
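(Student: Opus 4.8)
The plan is to apply \cref{prop:continuity_lemma} with parameter $\Delta t\in\RR^m$ to each problem, after rewriting it as an unconstrained minimization over $\RR^n$ by absorbing the constraints into an extended-real-valued objective. For \cref{prob:vcs}, define
\begin{equation*}
    f_{\mathrm{V}}(p,\Delta t)\coloneq
    \begin{cases}
        -\log\det W(p;\Delta t) & \text{if } p\in\Delta \text{ and } W(p;\Delta t)\succ O, \\
        \infty & \text{otherwise,}
    \end{cases}
\end{equation*}
and define $f_{\mathrm{A}}$ analogously with objective $\mathrm{tr}\left(W(p;\Delta t)^{-1}\right)$. Then $P(\Delta t)\coloneq\argmin_{p\in\RR^n}f_{\mathrm{V}}(p,\Delta t)$ coincides with the optimal-solution set of \cref{prob:vcs}, and it is the singleton $\{p^*_{\mathrm{V}}(\Delta t)\}$ precisely when $\Delta t\in\widetilde{D}_{\mathrm{V}}$.

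Next I would verify the three hypotheses of \cref{prop:continuity_lemma}. Properness is immediate, since any $\Delta t\in\widetilde{D}_{\mathrm{V}}$ furnishes a point at which $f_{\mathrm{V}}$ is finite. Level-boundedness in $p$ locally uniformly in $\Delta t$ is also immediate: every sublevel set $\{p\mid f_{\mathrm{V}}(p,\Delta t)\leq\alpha\}$ is contained in the simplex $\Delta$, which is a fixed bounded set independent of $\Delta t$, so one may take $B=\Delta$ for every neighborhood $V$. The substantive hypothesis is lower semicontinuity. Here I would use that $W(p;\Delta t)$ is jointly continuous in $(p,\Delta t)$: it is linear---hence continuous---in $p$ through $\mathrm{diag}(p)$, and continuous in $\Delta t$ because by \cref{prop:analyticity} its entries are real analytic on $\mathrm{int}\,D$, with continuity up to the boundary inherited from the continuity of the state transition matrix. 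Since the map $M\mapsto-\log\det M$, extended by $\infty$ off the positive-definite cone, is a closed convex function and thus lower semicontinuous on the symmetric matrices, composing it with the continuous map $(p,\Delta t)\mapsto W(p;\Delta t)$ and adding the indicator of the closed set $\Delta$ preserves lower semicontinuity. The same argument applies to $M\mapsto\mathrm{tr}(M^{-1})$ for $f_{\mathrm{A}}$.

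Finally I would fix $\overline{\Delta t}\in\widetilde{D}_{\mathrm{V}}$ and verify the continuity condition required by \cref{prop:continuity_lemma}. Taking $\overline{p}\coloneq p^*_{\mathrm{V}}(\overline{\Delta t})$, optimality gives $W(\overline{p};\overline{\Delta t})\succ O$; since positive definiteness is an open condition and $W(\overline{p};\cdot)$ is continuous, $W(\overline{p};\Delta t)\succ O$ for all $\Delta t$ in a neighborhood of $\overline{\Delta t}$, whence $f_{\mathrm{V}}(\overline{p},\Delta t)=-\log\det W(\overline{p};\Delta t)$ is continuous in $\Delta t$ at $\overline{\Delta t}$. \cref{prop:continuity_lemma} then asserts that for any sequence $\Delta t^{(k)}\to\overline{\Delta t}$ in $\widetilde{D}_{\mathrm{V}}$, the minimizers $p^{(k)}=p^*_{\mathrm{V}}(\Delta t^{(k)})\in P(\Delta t^{(k)})$ form a bounded sequence all of whose cluster points lie in $P(\overline{\Delta t})=\{\overline{p}\}$; hence $p^{(k)}\to\overline{p}=p^*_{\mathrm{V}}(\overline{\Delta t})$. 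This is exactly sequential continuity of $p^*_{\mathrm{V}}$ on $\widetilde{D}_{\mathrm{V}}$, and the identical argument with $\mathrm{tr}\left(W(p;\Delta t)^{-1}\right)$ establishes continuity of $p^*_{\mathrm{A}}$ on $\widetilde{D}_{\mathrm{A}}$. I expect the only delicate point to be the lower-semicontinuity verification, specifically that the $\infty$ penalty assigned where $W$ fails to be positive definite is consistent with lower semicontinuity; this is guaranteed by the blow-up of $-\log\det W$ (respectively $\mathrm{tr}(W^{-1})$) as the smallest eigenvalue of $W$ tends to $0$ from above.
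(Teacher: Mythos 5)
Your proposal is correct and follows essentially the same route as the paper's proof: both rewrite \cref{prob:vcs,prob:aecs} as unconstrained minimization of extended-real-valued objectives and invoke \cref{prop:continuity_lemma} after verifying properness, level-boundedness in $p$ locally uniformly in $\Delta t$, and lower semicontinuity, then conclude identically from the singleton $\argmin$ at $\overline{\Delta t}$ and the local positive definiteness of $W(\overline{p};\Delta t)$. The only cosmetic differences are that the paper checks lower semicontinuity by a direct sequential argument on sublevel sets while you invoke closedness of $M\mapsto-\log\det M$ (resp. $M\mapsto\mathrm{tr}(M^{-1})$) as a convex function composed with the continuous map $(p,\Delta t)\mapsto W(p;\Delta t)$, and that your $f_{\mathrm{V}}$, $f_{\mathrm{A}}$ should also assign $\infty$ when $\Delta t\notin D$ (as the paper's $g$ and $h$ do), since $W(p;\Delta t)$ is only defined for $\Delta t\in D$ and the closedness of $D$ is needed for the sublevel sets to be closed.
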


\begin{proof}
    Let us define $g,h\colon\RR^n\times\RR^m\to\RR\cup\{\infty\}$ as
    \begin{align*}
        g(p,\Delta t)&\coloneq
        \begin{cases}
            -\log\det W(p;\Delta t) & \text{if} \quad
            \begin{aligned}
                \Delta t\in D,p\in\Delta, \\
                W(p;\Delta t)\succ O,
            \end{aligned} \\
            \infty & \text{otherwise},
        \end{cases} \\
        h(p,\Delta t)&\coloneq
        \begin{cases}
            \mathrm{tr}\left(W(p;\Delta t)^{-1}\right) & \text{if} \quad
            \begin{aligned}
                \Delta t\in D,p\in\Delta, \\
                W(p;\Delta t)\succ O,
            \end{aligned} \\
            \infty & \text{otherwise}.
        \end{cases}
    \end{align*}
    Then, $g$ and $h$ are proper.
    Moreover, $g$ is level-bounded in $p$ locally uniformly in $\Delta t$ since $\{p\in\RR^n\mid g(p,\Delta t)\leq\alpha\}\subset\Delta$ for all $\Delta t\in\RR^m$ and $\alpha\in\RR$, and $\Delta$ is bounded.
    Similarly, $h$ is also level-bounded in $p$ locally uniformly in $\Delta t$.

    Next, we prove that $g$ and $h$ are lower semicontinuous, i.e., $\mathrm{lev}_{\leq\alpha}g$ and $\mathrm{lev}_{\leq\alpha}h$ are closed for every $\alpha\in\RR$.
    Let $(p^{(k)},\Delta t^{(k)})\in\mathrm{lev}_{\leq\alpha}g$ with $(p^{(k)},\Delta t^{(k)})\to(\overline{p},\overline{\Delta t})\in\RR^n\times\RR^m$ as $k\to\infty$.
    Since $\det W(p^{(k)},\Delta t^{(k)})\geq\mathrm{e}^{-\alpha}$, by continuity we obtain $\det W(\overline{p},\overline{\Delta t})\geq\mathrm{e}^{-\alpha}$ and $W(\overline{p},\overline{\Delta t})\succ O$.
    Since $D$ and $\Delta$ are closed, we have $\overline{\Delta t}\in D$ and $\overline{p}\in\Delta$; hence $g(\overline{p},\overline{\Delta t})\leq\alpha$ implying $(\overline{p},\overline{\Delta t})\in\mathrm{lev}_{\leq\alpha}g$.
    This shows that $\mathrm{lev}_{\leq\alpha}g$ is closed.
    Similarly, we can prove that $\mathrm{lev}_{\leq\alpha}h$ is closed.

    Assume that $\Delta t^{(k)}\in\widetilde{D}_{\mathrm{V}}$ and $\Delta t^{(k)}\to\overline{\Delta t}\in\widetilde{D}_{\mathrm{V}}$ as $k\to\infty$.
    Then, $\argmin_{p\in\RR^n} g(p,\Delta t^{(k)})=\{p^*_{\mathrm{V}}(\Delta t^{(k)})\}$ and $\argmin_{p\in\RR^n} g(p,\overline{\Delta t})=\{p^*_{\mathrm{V}}(\overline{\Delta t})\}$, and $W(p^*_{\mathrm{V}}(\overline{\Delta t});\overline{\Delta t})\succ O$.
    By continuity, $W(p^*_{\mathrm{V}}(\overline{\Delta t});\Delta t)\succ O$ also holds for all $\Delta t$ sufficiently close to $\overline{\Delta t}$, and $g(p^*_{\mathrm{V}}(\overline{\Delta t}),\Delta t)=-\log\det W(p^*_{\mathrm{V}}(\overline{\Delta t});\Delta t)$ is continuous in $\Delta t$ at $\overline{\Delta t}$.
    Therefore, it follows from \cref{prop:continuity_lemma} that the sequence $\{p^*_{\mathrm{V}}(\Delta t^{(k)})\}_{k\in\NN}$ converges to $p^*_{\mathrm{V}}(\overline{\Delta t})$; consequently, $p^*_{\mathrm{V}}(\Delta t)$ is continuous on $\widetilde{D}_{\mathrm{V}}$.
    Similarly, it follows that $p^*_{\mathrm{A}}(\Delta t)$ is continuous on $\widetilde{D}_{\mathrm{A}}$. \qed
\end{proof}

\section{Algorithm and data-driven computation}
\label{sec:algorithm}

In this section, we discuss an algorithm to compute the controllability scores.
The algorithm consists of two primary parts.
The controllability Gramians are computed in the first step, and then the optimization algorithm is performed.
The optimization algorithm is identical to the one used in \cite{SatoTerasaki2024} and summarized in \cref{subsec:algorithm}.
The computation of the controllability Gramians requires knowledge of the system.
In \cref{subsec:model-based}, we elaborate on the computation in the case where the system is already identified.
However, system identification is challenging, especially for LTV systems; thus, the assumption that the system is already identified is not necessarily practical.
Therefore, we propose a data-driven method to compute the controllability Gramians in \cref{subsec:data-driven}.

Suppose that the tuple of durations is fixed throughout this section, and let us abbreviate $A(t;\Delta t)$ as $A(t)$, $\Phi(t,\tau;\Delta t)$ as $\Phi(t,\tau)$, and $W(p;\Delta t)$ as $W(p)$ for simplicity.
Thanks to \cref{prop:uniqueness}, assuming that controllability scores are uniquely determined does not cause a problem in most practical cases.

\subsection{Algorithm for controllability scores}
\label{subsec:algorithm}

\begin{algorithm}[t]
    \begin{algorithmic}[1]
        \REQUIRE The terminal condition $\varepsilon\geq0$ and the initial point $p^{(0)}\coloneq(1/n,\ldots,1/n)^\top$.
        \STATE Compute $W_i\ (i=1,\ldots,n)$ in \cref{eq:ltv_diagonal_gramian_base}.
        \FOR {$k=0,1,\ldots$}
            \STATE Determine a step size $\alpha^{(k)}$ by \cref{alg:armijo}.
            \STATE $p^{(k+1)}\leftarrow \Pi_{\Delta}\left(p^{(k)}-\alpha^{(k)}\nabla f(p^{(k)})\right)$
            \IF {$\norm{p^{(k)}-p^{(k+1)}}\leq\varepsilon$}
                \RETURN $p^{(k+1)}$
            \ENDIF
        \ENDFOR
    \end{algorithmic}
    \caption{The projected gradient method for \cref{prob:vcs,prob:aecs}}
    \label{alg:projected_gradient}
\end{algorithm}

\begin{algorithm}[t]
    \begin{algorithmic}[1]
        \REQUIRE The point $p^{(k)}$ in \cref{alg:projected_gradient}, the parameters $\sigma,\rho\in(0,1)$, and the initial step size $\alpha>0$.
        \STATE $\alpha^{(k)}\leftarrow\alpha$
        \WHILE {\TRUE}
            \STATE $\widetilde{p}^{(k)}\leftarrow \Pi_{\Delta}\left(p^{(k)}-\alpha^{(k)}\nabla f(p^{(k)})\right)$
            \IF {$f(\widetilde{p}^{(k)})\leq f(p^{(k)})+\sigma\nabla f(p^{(k)})^\top (\widetilde{p}^{(k)}-p^{(k)})$}
                \BREAK
            \ELSE
                \STATE $\alpha^{(k)}\leftarrow \rho\alpha^{(k)}$
            \ENDIF
        \ENDWHILE
        \RETURN $\alpha^{(k)}$
    \end{algorithmic}
    \caption{The Armijo rule}
    \label{alg:armijo}
\end{algorithm}

To calculate controllability scores, the optimal solutions to \cref{prob:vcs,prob:aecs}, we can employ the projected gradient method (\cref{alg:projected_gradient}), which is the same algorithm as the case of LTI systems~\cite{SatoTerasaki2024}.
Here, $f$ is the objective function, $\Pi_{\Delta}$ is a Euclidian projection onto $\Delta$, and $W_i$ is defined as
\begin{equation}\label{eq:ltv_diagonal_gramian_base}
    W_i\coloneq\int_0^{t_m} \Phi(t_m,\tau) e_ie_i^\top \Phi(t_m,\tau)^\top\rd\tau \quad (i=1,\ldots,n).
\end{equation}

In the first step of \cref{alg:projected_gradient}, we precompute the controllability Gramians $W_i\ (i=1,\ldots,n)$.
The knowledge of the system is utilized exclusively in the precomputation and is not required elsewhere.
We summarize the computation in the case where the system matrix $A(t)$ in \cref{eq:ltv_base} is known in \cref{subsec:model-based} and propose a data-driven method in \cref{subsec:data-driven}.
The only time-related parameter involved in \cref{alg:projected_gradient} is $\Delta t$.
Note that there is no need to specify the constant $s_k$, introduced in \cref{subsec:systems}, explicitly.

By employing $W_i \ (i=1,\ldots,n)$, we can calculate the controllability Gramian as
\begin{equation*}
    W(p)=\sum_{i=1}^n p_iW_i,
\end{equation*}
and the objective functions to \cref{prob:vcs,prob:aecs} can be computed:
\begin{gather*}
    g(p)\coloneq -\log\det W(p), \\
    h(p)\coloneq \mathrm{tr}\left(W(p)^{-1}\right).
\end{gather*}
Furthermore, their gradients can also be computed as
\begin{gather*}
    (\nabla g(p))_i=-\mathrm{tr}\left(W(p)^{-1}W_i\right), \\
    (\nabla h(p))_i=-\mathrm{tr}\left(W(p)^{-1}W_i W(p)^{-1}\right),
\end{gather*}
respectively.
Thus, after calculating $W_i$, the time complexity at each iteration of \cref{alg:projected_gradient} is $O(n^3)$.

The feasible region for \cref{prob:vcs,prob:aecs} is expressed as
\begin{equation*}
    \mathcal{F}\coloneq \left\{p\in\RR^n \;\middle|\; p\in\Delta,\ W(p)\succ O\right\}.
\end{equation*}
By setting the initial point $p^{(0)}=(1/n,\ldots,1/n)^\top$, we can guarantee that $p^{(0)}\in\mathcal{F}$.
The sequence $\{p^{(k)}\}$ generated by \cref{alg:projected_gradient} satisfies $f(p^{(k+1)})\leq f(p^{(k)})$~\cite{SatoTerasaki2024}, it follows by induction that $W(p^{(k)})\succ O$, and hence $p^{(k)}\in\mathcal{F}$.
Consequently, the constraints are automatically fulfilled if $p^{(0)}\in\mathcal{F}$.
The important points are that $\Pi_{\Delta}$ is employed in \cref{alg:projected_gradient} instead of the projection onto $\mathcal{F}$, and that $\Pi_{\Delta}$ can be efficiently computed~\cite{Condat2016}.
Thus, we can perform the projected gradient method efficiently.
Since the objective function is convex, as stated in \cref{prop:convexity}, the convergence to the global optimal solution is guaranteed~\cite{SatoTerasaki2024,Iusem2003}.
Furthermore, since the convexity is even strict in almost all cases, it is shown that the convergence rate is linear~\cite{SatoKawamura2025}; hence, the computation is guaranteed to be efficient.


\subsection{Model-based computation of controllability Gramians}
\label{subsec:model-based}

In this subsection, we elaborate on the model-based computation of the controllability Gramians $W_i \ (i=1,\ldots,n)$ in two cases.
The first case is where the system matrix $A(t)$ is general, and the second case is where the system is a temporal network, i.e., an LTV system \cref{eq:ltv_base} in which the matrix $A(t)$ is expressed by \cref{eq:ltv_matrix} and $A_k(t)$ is constant.
We assume that the system matrix $A(t)$ is already known in this subsection.

Since the matrix $\Phi(t_m,\tau)^\top$ is the inverse of the transpose of $\Phi(\tau,t_m)$, by \cref{eq:transition_definition}, it is the solution to
\begin{equation*}
    \dfrac{\partial}{\partial\tau}\Phi(t_m,\tau)^\top=-A(\tau)^\top\Phi(t_m,\tau)^\top,\quad \Phi(t_m,t_m)^\top=I.
\end{equation*}
Thus, we can naively calculate $W_i$~\cite{Perev2019} as
\begin{equation}\label{eq:W_integral}
    W_i=\int_{0}^{t_m} y_i(\tau)y_i(\tau)^\top\rd\tau,
\end{equation}
where $Y(t)=[y_1(t),\ldots,y_n(t)]^\top\in\RR^{n\times n}$ is the solution to
\begin{equation}\label{eq:W_equation}
    \dfrac{\rd}{\rd t}Y(t)=-A(t)^\top Y(t),\quad Y(t_m)=I.
\end{equation}

In numerical calculations, the differential equation and the integration are discretized with a specific time step size $\Delta\tau$.
The computational cost of solving \cref{eq:W_equation} is $O(n^3t_m/\Delta\tau)$ when $A(t)$ is dense, while it is $O(\mathrm{nnz}(A)nt_m/\Delta\tau)$ when $A(t)$ is a sparse matrix-valued function, where $\mathrm{nnz}(A)$ denotes the maximum number of nonzero entries of $A(t)$ with respect to $t$.

The overall computational cost of the following naive numerical quadrature  is $O(n^3t_m/\Delta\tau)$:
\begin{equation}\label{eq:W_naive}
    W_i\approx\sum_{\ell=0}^{t_m/\Delta\tau} y_i(\ell\Delta\tau)y_i(\ell\Delta\tau)^\top\Delta\tau \quad(i=1,\ldots,n).
\end{equation}
The computational cost is expensive; however, the use of orthogonal polynomial expansions has the potential to reduce it~\cite{Perev2019}.
Let
\begin{equation*}
    \varphi_{j}(t)\coloneq\sqrt{\dfrac{2j+1}{2}}\dfrac{1}{2^jj!}\dfrac{\rd^j}{\rd t^j}(t^2-1)^j\quad (j=0,1,2,\ldots)
\end{equation*}
be the normalized Legendre polynomials.
These polynomials form a complete orthonormal basis of $L^2([-1,1];\RR)$.
Then $W_i$ can be approximated as
\begin{equation}\label{eq:W_Legendre}
    W_i\approx\sum_{k=1}^{m}\sum_{j=0}^{J} q_{ij}^{(k)}\left(q_{ij}^{(k)}\right)^\top \quad (i=1,\ldots,n),
\end{equation}
where
\begin{equation}\label{eq:W_coefficient}
    q_{ij}^{(k)}=\sqrt{\dfrac{2}{\Delta t_k}}\int_{t_{k-1}}^{t_k} y_i(\tau)\varphi_j\left(\dfrac{2}{\Delta t_k}\tau-\dfrac{t_{k}+t_{k-1}}{\Delta t_k}\right)\rd\tau
\end{equation}
is the $j$-th Fourier vector coefficient of $y_i(t)$ on the interval $[t_{k-1},t_k]$, and $J$ is the order of series truncation in the Legendre orthogonal series approximation.
The minor difference from \cite{Perev2019} is that the interval $[0,t_m]$ is subdivided into $m$ subintervals.
While the expansion in Legendre polynomials exhibits rapid decay of the approximation error when the integrand is sufficiently smooth~\cite{WangXiang2012}, the convergence slows down in the absence of sufficient smoothness.
To improve the approximation accuracy, we therefore approximate the integrand separately on each subinterval so that the integrand becomes smooth within each piece.
When \cref{eq:W_coefficient} is approximated by numerical quadrature, the overall computational cost of \cref{eq:W_Legendre} is $O(n^2Jmt_m/\Delta\tau+n^3Jm)$.
Therefore, if the number of nodes $n$ is extremely large, whereas the number of abrupt changes $m$ remains small and an accurate approximation can be achieved with a relatively low polynomial degree $J$, then the overall computational complexity can be reduced.

Alternatively, for temporal networks, we can also use the Lyapunov equations.
Here, let $A_k(t)\equiv A_k\ (k=1,\ldots,m)$, and we make the following assumption.
\begin{assumption}\label{assump:eigenvalues}
    The matrices $A_k$ and $-A_k$ do not have a common eigenvalue for $k=1,\ldots,m$.
\end{assumption}
Then, we can use the following representation~\cite{Hou2023}:
\begin{subequations}
    \begin{gather}
        W_i=\sum_{k=1}^m E_k W_i^{(k)} E_k^\top, \\
        E_k=\mathrm{e}^{A_m\Delta t_m}\mathrm{e}^{A_{m-1}\Delta t_{m-1}}\ldots \mathrm{e}^{A_{k+1}\Delta t_{k+1}}, \\
        A_kW_i^{(k)}+W_i^{(k)}A_k^\top=-e_ie_i^\top+\mathrm{e}^{A_k\Delta t_k}e_ie_i^\top\mathrm{e}^{A_k^\top\Delta t_k}. \label{eq:lyapunov}
    \end{gather}
\end{subequations}
It follows from \cref{assump:eigenvalues} that \cref{eq:lyapunov} has a unique solution~\cite[Theorem 2.4.4.1]{HornJohnson2012}.
More specifically, the calculation is performed by \cref{alg:W_temporal}.
We can use the Bartels--Stewart algorithm~\cite{BartelsStewart1972} or the CF--ADI algorithm~\cite{LiWhite2002} to solve \cref{eq:lyapunov} in \cref{state:lyapunov}.
The choice of method should be determined by considering the time complexity and accuracy.

\begin{algorithm}[t]
    \begin{algorithmic}[1]
        \setcounter{ALC@unique}{0}
        \REQUIRE The adjacency matrices $A_1,\ldots,A_m$ and the durations $\Delta t_1,\ldots,\Delta t_m$
        \FOR {$i=1,\ldots,n$}
            \STATE $W_i\leftarrow O\in\RR^{n\times n}$
        \ENDFOR
        \STATE $E\leftarrow I\in\RR^{n\times n}$
        \FOR {$k=m,\ldots,1$}
            \FOR {$i=1,\ldots,n$}
                \STATE Compute $W_i^{(k)}$ by solving \cref{eq:lyapunov}. \label{state:lyapunov}
                \STATE $W_i\leftarrow W_i+EW_i^{(k)}E^\top$ \label{state:multiplication}
            \ENDFOR
            \STATE $E\leftarrow E\mathrm{e}^{A_k\Delta t_k}$
        \ENDFOR
        \ENSURE $W_i \ (i=1,\ldots,n)$
    \end{algorithmic}
    \caption{Model-based computation of $W_i$ for temporal networks}
    \label{alg:W_temporal}
\end{algorithm}

The Bartels--Stewart algorithm is a direct method that can be performed with the time complexity $O(n^3)$.
Therefore, the overall time complexity of \cref{alg:W_temporal} is $O(n^4m)$ when the Bartels--Stewart algorithm is used in \cref{state:lyapunov}.

The CF--ADI algorithm is an iterative method whose output is a low-rank approximation.
In the case of \cref{eq:lyapunov}, $W_i^{(k)}$ can be represented as $W_i^{(k)}=W_{i,1}^{(k)}-W_{i,2}^{(k)}$ where
\begin{subequations}\label{eq:lyapunov_cfadi}
    \begin{gather}
        A_kW_{i,1}^{(k)}+W_{i,1}^{(k)}A_k^\top=-e_ie_i^\top, \label{eq:lyapunov_cfadi1} \\
        A_kW_{i,2}^{(k)}+W_{i,2}^{(k)}A_k^\top=-\mathrm{e}^{A_k\Delta t_k}e_ie_i^\top\mathrm{e}^{A_k^\top\Delta t_k}. \label{eq:lyapunov_cfadi2}
    \end{gather}
\end{subequations}
By applying the CF--ADI algorithm to \cref{eq:lyapunov_cfadi1,eq:lyapunov_cfadi2}, we can obtain approximations $Z_{i,1}^{(k)}\in\RR^{n\times r_1}$ and $Z_{i,2}^{(k)}\in\RR^{n\times r_2}$ where $Z_{i,1}^{(k)}(Z_{i,1}^{(k)})^\top-Z_{i,2}^{(k)}(Z_{i,2}^{(k)})^\top \approx W_i^{(k)}$ and $r_1,r_2\ll n$.
The accuracy depends on the matrix $A_k$ and the iteration number.

When the CF--ADI algorithm is used in \cref{alg:W_temporal}, the time complexity excluding \cref{state:lyapunov} is $O(n^3rm)$, where $r$ is the maximum rank of the approximations.
Here, note that the matrix multiplication in \cref{state:multiplication} can be performed in $O(n^2r)$ thanks to the low-rank structure.
It is difficult to evaluate the time complexity of the CF--ADI algorithm since the total number of iterations depends on the tolerance of the approximation and the matrix $A_k$.

\begin{remark}\label{rem:cf_adi}
    Although $W_i^{(k)}=W_{i,1}^{(k)}-W_{i,2}^{(k)}$ is positive semidefinite if the computation is exact, the approximation $Z_{i,1}^{(k)}(Z_{i,1}^{(k)})^\top-Z_{i,2}^{(k)}(Z_{i,2}^{(k)})^\top (\approx W_i^{(k)})$ is not necessarily positive semidefinite.
    If $W_i$ is not positive semidefinite, \cref{alg:projected_gradient} might be unstable.
    Therefore, the tolerance of the CF--ADI algorithm must be small.
\end{remark}

\subsection{Data-driven computation of controllability Gramians}
\label{subsec:data-driven}

In this subsection, we propose a data-driven method to compute the controllability Gramians $W_i \ (i=1,\ldots,n)$.
We make the following assumption.
\begin{assumption}\label{assump:data-driven}
    The data of state trajectories $x^{(k)}(t),\ t\in[0,t_m] \ (k=1,\ldots,N)$ are given, and their initial values $x^{(1)}(0),\ldots,x^{(N)}(0)$ span $\RR^n$.
\end{assumption}

Under \cref{assump:data-driven}, $x^{(k)}(t) \ (k=1,\ldots,n)$ also span $\RR^n$ for all $t\in[0,t_m]$.
Thus, for $i=1,\ldots,n$, there exists $\alpha_i(t)\in\RR^{N}$ which satisfies
\begin{equation}\label{eq:alpha}
    X(t)\alpha_i(t)=e_i,
\end{equation}
where $X(t)\coloneqq [x^{(1)}(t),\ldots,x^{(N)}(t)]\in\RR^{n\times N}$.
We can represent $W_i$ as
\begin{align*}
    W_i&=\int_{0}^{t_m} \Phi(t_m,\tau) e_i e_i^\top \Phi(t_m,\tau)^\top \rd\tau \\
    &=\int_{0}^{t_m} \Phi(t_m,\tau)X(\tau)\alpha_i(\tau)\alpha_i(\tau)^\top X(\tau)^\top\Phi(t_m,\tau)^\top \rd\tau \\
    &=\int_{0}^{t_m} X(t_m)\alpha_i(\tau) \alpha_i(\tau)^\top X(t_m)^\top \rd\tau. \label{eq:W_representation}
\end{align*}
Thus, we can approximate $W_i$ as
\begin{equation}\label{eq:W_approximation}
    W_i \approx \sum_{\ell=0}^{t_m/\Delta\tau} X(t_m)\alpha_{i}(\ell\Delta\tau)\alpha_{i}(\ell\Delta\tau)^\top X(t_m)^\top \Delta\tau,
\end{equation}
where $\Delta\tau$ is the discretization width, and we do not require knowledge of $A(t)$ itself.
Although there may exist multiple solutions to \cref{eq:alpha}, we can use any of them.
The simplest choice is the least-norm solution, and we use it in \cref{subsec:performance}.

\begin{algorithm}[t]
    \begin{algorithmic}[1]
        \setcounter{ALC@unique}{0}
        \REQUIRE The $N$ observed data $X(t)\in\RR^{n\times N}$ and the discretization width $\Delta\tau$.
        \FOR {$i=1,\ldots,n$}
            \STATE $W_i\leftarrow O\in\RR^{n\times n}$
        \ENDFOR
        \FOR {$\ell=0,1,\ldots,t_m/\Delta\tau$}
            \STATE Perform a singular value decomposition for $X(\ell\Delta\tau)$. \label{state:svd}
            \FOR {$i=1,\ldots,n$}
                \STATE Compute the least-norm solution $\alpha_i(\ell\Delta\tau)$ to \cref{eq:alpha} with $t=\ell\Delta\tau$. \label{state:solve}
                \STATE $W_i\leftarrow W_i+X(t_m)\alpha_{i}(\ell\Delta\tau)\alpha_{i}(\ell\Delta\tau)^\top X(t_m)^\top \Delta\tau$ \label{state:sum}
            \ENDFOR
        \ENDFOR
        \ENSURE $W_i \ (i=1,\ldots,n)$
    \end{algorithmic}
    \caption{Data-driven approximation of $W_i$}
    \label{alg:W_approximation}
\end{algorithm}

More specifically, the approximation \cref{eq:W_approximation} is performed by \cref{alg:W_approximation}.
In \cref{state:svd}, we perform a singular value decomposition for $X(\ell\Delta\tau)\in\RR^{n\times N}$ in $O(n^2N)$.
The important point for our task is that \cref{eq:alpha} for each $i$ shares the same coefficient matrix $X(t)$.
Thus, once the decomposition is performed, we can compute the least-norm solution $\alpha_i(\ell\Delta\tau)$ in \cref{state:solve} in $O(nN)$.
\Cref{state:sum} can be computed in $O(nN)$.
Therefore, the time complexity of the overall procedure is $O(n^2Nt_m/\Delta\tau)$.


\begin{table*}[htbp]
    \centering
\begin{threeparttable}
    \footnotesize
    \caption{Methods to compute the controllability Gramians}
    \label{tab:complexity}
    \begin{center}
    \begin{tabular}{c||c|c|c|c|c}
        \hline
        & \multicolumn{4}{c|}{{\bf Model-based}}  & {\bf Data-driven} \\ \hline
        \multirow{2}{*}{Algorithm} & \cref{alg:W_temporal} & \cref{alg:W_temporal} & \multirow{2}{*}{\cref{eq:W_equation,eq:W_Legendre,eq:W_coefficient}} & \multirow{2}{*}{\cref{eq:W_equation,eq:W_naive}} & \multirow{2}{*}{\cref{alg:W_approximation}} \\
        & Bartels--Stewart  & CF--ADI & & \\ \hline
        Target & \multicolumn{2}{|c|}{Temporal networks} & LTV systems with \cref{eq:ltv_matrix} & \multicolumn{2}{c}{General LTV systems~\cref{eq:ltv_base}} \\ \hline
        \multirow{2}{*}{Constraint} & \multirow{2}{*}{\cref{assump:eigenvalues}} & \cref{assump:eigenvalues} & & & \multirow{2}{*}{\cref{assump:data-driven}} \\
        & & \cref{rem:cf_adi} & & \\ \hline
        Complexity & $O(n^4m)$ & $O(n^3rm)$\tnote{a} & $O((\mathrm{nnz}(A)n+n^2Jm)t_m/\Delta\tau+n^3Jm)$ & $O(n^3t_m/\Delta\tau)$ & $O(n^2Nt_m/\Delta\tau)$ \\ \hline
    \end{tabular}
    \begin{tablenotes}
        \item[a] The time complexity excludes \cref{state:lyapunov}.
    \end{tablenotes}
    \end{center}
\end{threeparttable}
\end{table*}

The methods to compute the controllability Gramians and their properties are summarized in \cref{tab:complexity}.
The four methods on the left are model-based ones discussed in \cref{subsec:model-based}, while the rightmost method is the data-driven one proposed in this section.
The two methods on the left are applicable only to temporal networks, whereas the method in the middle is applicable to LTV systems with \cref{eq:ltv_matrix}, and the two methods on the right are applicable to general LTV systems~\cref{eq:ltv_base}.
The methods for temporal networks require \cref{assump:eigenvalues}.
Furthermore, the tolerance of the CF--ADI algorithm must be small, as detailed in \cref{rem:cf_adi}.
The proposed data-driven method requires \cref{assump:data-driven}, which guarantees that the observed data span the entire space.

Which method is superior in terms of time complexity depends on the number of snapshots $m$, the maximum rank of the approximations $r$, the final time $t_m$, the order of series truncation in the Legendre orthogonal series approximation $J$, the maximum number of nonzero entries of $A(t)$ with respect to $t$, and the discretization width $\Delta\tau$.
In each model-based method, the controllability Gramians $W_i \ (i=1,\ldots,n)$ can be computed independently, allowing for parallel processing.
In \cref{alg:W_approximation}, they cannot be computed independently in the same way due to \cref{state:svd}.
However, since sequential computation with respect to time parameter $\ell$ is not necessary, we can perform parallel processing by offsetting $\ell$.
Therefore, when the number of processing units is less than or equal to $n$, the scalability is linear in both the model-based and data-driven methods.

\section{Numerical experiments}
\label{sec:experiments}

In this section, we show numerical examples.
First, we compare the controllability scores and the control energy centralities~\cite{Summers2016} for LTI and LTV systems on a directed network in \cref{subsec:directed_comparison}.
Next, we compare the controllability scores in this paper with the GCSs~\cite{Mo2025} for LTI and LTV systems on an undirected network in \cref{subsec:undirected_comparison}.
Subsequently, we investigate the numerical dependence of the controllability scores on the time parameters and the chronological order of the snapshots in \cref{subsec:dependence}.
Lastly, in \cref{subsec:performance}, we assess the performance of the data-driven method proposed in \cref{subsec:data-driven}.

Throughout this section, we disregard gradual changes for simplicity and employ simple examples of temporal networks consisting of the graphs shown in \cref{fig:temporal} and their aggregated networks shown in \cref{fig:aggregated}.
The temporal networks have four snapshots, and \cref{tab:temporal_networks} shows their orientation, chronological order, and the tuple of durations.
All the weights of the edges drawn in \cref{fig:temporal} are $1$.
Although each node in all the snapshots has a negative self-loop with a weight of $0.2$, we here omit to draw them for simplicity.
Let the adjacency matrices of (a), (b), (c), and (d) be denoted by $A_{\mathrm{a}}, A_{\mathrm{b}}, A_{\mathrm{c}}$, and $A_{\mathrm{d}}$, respectively.
Then, temporal network 6 is, for instance, represented by $A(t;\Delta t)$ in \cref{eq:ltv_matrix}, obtained by setting $A_1(t)\equiv A_{\mathrm{b}}, A_2(t)\equiv A_{\mathrm{d}}, A_3(t)\equiv A_{\mathrm{a}}, A_4(t)\equiv A_{\mathrm{c}}, \Delta t_1=2.1, \Delta t_2=1.8, \Delta t_3=1.9$, and $\Delta t_4=2.2$.
For all the networks, $t_4=8$.
Network 2 is an undirected network; in this case, we make the graphs in \cref{fig:temporal} undirected by adding the edge in the opposite direction (excluding self-loops); the adjacency matrices, thus, become symmetric.

The adjacency matrix of the aggregated network is defined as the weighted average of the snapshot adjacency matrices with respect to their durations.
Aggregated network 6 is, for instance, is an LTI system~\cref{eq:lti_base} represented by $A=\frac{2.1}{8}A_{\mathrm{b}}+\frac{1.8}{8}A_{\mathrm{d}}+\frac{1.9}{8}A_{\mathrm{a}}+\frac{2.2}{8}A_{\mathrm{c}}$.
Throughout all numerical experiments, we used the terminal condition parameter $\varepsilon=10^{-7}$ in \cref{alg:projected_gradient}.

\begin{figure}[htbp]
    \begin{tabular}{cc}
        \begin{minipage}[t]{0.45\linewidth}
            \centering
            \begin{tikzpicture}[scale=0.8]
                \node (n1) at (4.1,2.0) {1};
                \node (n2) at (0.7,2.5) {2};
                \node (n3) at (2.9,2.0) {3};
                \node (n4) at (1.7,2.0) {4};
                \node (n5) at (4.1,0.3) {5};
                \node (n6) at (1,0) {6};
                \node (n7) at (2.3,3.5) {7};
                \node (n8) at (2.9,0.3) {8};
                \node (n9) at (4.1,3.5) {9};
                \node (n10) at (0,1.2) {10};
                
                \draw [thick] (n1) circle [radius=0.28];
                \draw [thick] (n2) circle [radius=0.28];
                \draw [thick] (n3) circle [radius=0.28];
                \draw [thick] (n4) circle [radius=0.28];
                \draw [thick] (n5) circle [radius=0.28];
                \draw [thick] (n6) circle [radius=0.28];
                \draw [thick] (n7) circle [radius=0.28];
                \draw [thick] (n8) circle [radius=0.28];
                \draw [thick] (n9) circle [radius=0.28];
                \draw [thick] (n10) circle [radius=0.28];
    
                \draw [-Latex,thick] (n2)--(n10);
                \draw [-Latex,thick] (n3)--(n8);
                \draw [-Latex,thick] (n7)--(n2);
                \draw [-Latex,thick] (n7)--(n3);
                \draw [-Latex,thick] (n9)--(n1);
                \draw [-Latex,thick] (n10)--(n6);
            \end{tikzpicture}
            \subcaption{}
            \label{subfig:1st}
        \end{minipage} &
        \begin{minipage}[t]{0.45\linewidth}
            \centering
            \begin{tikzpicture}[scale=0.8]
                \node (n1) at (4.1,2.0) {1};
                \node (n2) at (0.7,2.5) {2};
                \node (n3) at (2.9,2.0) {3};
                \node (n4) at (1.7,2.0) {4};
                \node (n5) at (4.1,0.3) {5};
                \node (n6) at (1,0) {6};
                \node (n7) at (2.3,3.5) {7};
                \node (n8) at (2.9,0.3) {8};
                \node (n9) at (4.1,3.5) {9};
                \node (n10) at (0,1.2) {10};
                
                \draw [thick] (n1) circle [radius=0.28];
                \draw [thick] (n2) circle [radius=0.28];
                \draw [thick] (n3) circle [radius=0.28];
                \draw [thick] (n4) circle [radius=0.28];
                \draw [thick] (n5) circle [radius=0.28];
                \draw [thick] (n6) circle [radius=0.28];
                \draw [thick] (n7) circle [radius=0.28];
                \draw [thick] (n8) circle [radius=0.28];
                \draw [thick] (n9) circle [radius=0.28];
                \draw [thick] (n10) circle [radius=0.28];
    
                \draw [-Latex,thick] (n4)--(n6);
                \draw [-Latex,thick] (n7)--(n1);
                \draw [-Latex,thick] (n7)--(n4);
                \draw [-Latex,thick] (n9)--(n1);
            \end{tikzpicture}
            \subcaption{}
            \label{subfig:2nd}
        \end{minipage} \\

        \begin{minipage}[t]{0.45\linewidth}
            \centering
            \begin{tikzpicture}[scale=0.8]
                \node (n1) at (4.1,2.0) {1};
                \node (n2) at (0.7,2.5) {2};
                \node (n3) at (2.9,2.0) {3};
                \node (n4) at (1.7,2.0) {4};
                \node (n5) at (4.1,0.3) {5};
                \node (n6) at (1,0) {6};
                \node (n7) at (2.3,3.5) {7};
                \node (n8) at (2.9,0.3) {8};
                \node (n9) at (4.1,3.5) {9};
                \node (n10) at (0,1.2) {10};
                
                \draw [thick] (n1) circle [radius=0.28];
                \draw [thick] (n2) circle [radius=0.28];
                \draw [thick] (n3) circle [radius=0.28];
                \draw [thick] (n4) circle [radius=0.28];
                \draw [thick] (n5) circle [radius=0.28];
                \draw [thick] (n6) circle [radius=0.28];
                \draw [thick] (n7) circle [radius=0.28];
                \draw [thick] (n8) circle [radius=0.28];
                \draw [thick] (n9) circle [radius=0.28];
                \draw [thick] (n10) circle [radius=0.28];
    
                \draw [-Latex,thick] (n1)--(n5);
                \draw [-Latex,thick] (n2)--(n10);
                \draw [-Latex,thick] (n3)--(n8);
                \draw [-Latex,thick] (n7)--(n2);
                \draw [-Latex,thick] (n7)--(n3);
                \draw [-Latex,thick] (n10)--(n6);
            \end{tikzpicture}
            \subcaption{}
            \label{subfig:3rd}
        \end{minipage} &
        \begin{minipage}[t]{0.45\linewidth}
            \centering
            \begin{tikzpicture}[scale=0.8]
                \node (n1) at (4.1,2.0) {1};
                \node (n2) at (0.7,2.5) {2};
                \node (n3) at (2.9,2.0) {3};
                \node (n4) at (1.7,2.0) {4};
                \node (n5) at (4.1,0.3) {5};
                \node (n6) at (1,0) {6};
                \node (n7) at (2.3,3.5) {7};
                \node (n8) at (2.9,0.3) {8};
                \node (n9) at (4.1,3.5) {9};
                \node (n10) at (0,1.2) {10};
                
                \draw [thick] (n1) circle [radius=0.28];
                \draw [thick] (n2) circle [radius=0.28];
                \draw [thick] (n3) circle [radius=0.28];
                \draw [thick] (n4) circle [radius=0.28];
                \draw [thick] (n5) circle [radius=0.28];
                \draw [thick] (n6) circle [radius=0.28];
                \draw [thick] (n7) circle [radius=0.28];
                \draw [thick] (n8) circle [radius=0.28];
                \draw [thick] (n9) circle [radius=0.28];
                \draw [thick] (n10) circle [radius=0.28];
    
                \draw [-Latex,thick] (n1)--(n5);
                \draw [-Latex,thick] (n4)--(n6);
                \draw [-Latex,thick] (n7)--(n1);
                \draw [-Latex,thick] (n7)--(n4);
            \end{tikzpicture}
            \subcaption{}
            \label{subfig:4th}
        \end{minipage} \\
    \end{tabular}
    \caption{The snapshots of the temporal networks}
    \label{fig:temporal}
\end{figure}

\begin{figure}[htbp]
    \centering
    \begin{tikzpicture}[scale=0.8]
        \node (n1) at (4.1,2.0) {1};
        \node (n2) at (0.7,2.5) {2};
        \node (n3) at (2.9,2.0) {3};
        \node (n4) at (1.7,2.0) {4};
        \node (n5) at (4.1,0.3) {5};
        \node (n6) at (1,0) {6};
        \node (n7) at (2.3,3.5) {7};
        \node (n8) at (2.9,0.3) {8};
        \node (n9) at (4.1,3.5) {9};
        \node (n10) at (0,1.2) {10};
        
        \draw [thick] (n1) circle [radius=0.28];
        \draw [thick] (n2) circle [radius=0.28];
        \draw [thick] (n3) circle [radius=0.28];
        \draw [thick] (n4) circle [radius=0.28];
        \draw [thick] (n5) circle [radius=0.28];
        \draw [thick] (n6) circle [radius=0.28];
        \draw [thick] (n7) circle [radius=0.28];
        \draw [thick] (n8) circle [radius=0.28];
        \draw [thick] (n9) circle [radius=0.28];
        \draw [thick] (n10) circle [radius=0.28];

        \draw [-Latex,thick] (n1)--(n5);
        \draw [-Latex,thick] (n2)--(n10);
        \draw [-Latex,thick] (n3)--(n8);
        \draw [-Latex,thick] (n4)--(n6);
        \draw [-Latex,thick] (n7)--(n1);
        \draw [-Latex,thick] (n7)--(n2);
        \draw [-Latex,thick] (n7)--(n3);
        \draw [-Latex,thick] (n7)--(n4);
        \draw [-Latex,thick] (n9)--(n1);
        \draw [-Latex,thick] (n10)--(n6);
    \end{tikzpicture}
    \caption{The structure of the aggregated network}
    \label{fig:aggregated}
\end{figure}

\begin{table}[htbp]
    \centering
    \caption{Temporal networks employed for numerical experiments}
    \label{tab:temporal_networks}
    \begin{tabular}{c|ccc}
        \hline
        Network & Orientation & Order & Durations \\ \hline\hline
        1 & directed   & (a) $\rightarrow$ (b) $\rightarrow$ (c) $\rightarrow$ (d) & $(2.0,2.0,2.0,2.0)$ \\ 
        2 & undirected & (a) $\rightarrow$ (b) $\rightarrow$ (c) $\rightarrow$ (d) & $(2.0,2.0,2.0,2.0)$ \\
        3 & directed   & (a) $\rightarrow$ (b) $\rightarrow$ (c) $\rightarrow$ (d) & $(1.9,2.1,2.2,1.8)$ \\
        4 & directed   & (a) $\rightarrow$ (b) $\rightarrow$ (c) $\rightarrow$ (d) & $(0.5,2.8,1.8,2.9)$ \\
        5 & directed   & (b) $\rightarrow$ (d) $\rightarrow$ (a) $\rightarrow$ (c) & $(2.0,2.0,2.0,2.0)$ \\
        6 & directed   & (b) $\rightarrow$ (d) $\rightarrow$ (a) $\rightarrow$ (c) & $(2.1,1.8,1.9,2.2)$ \\
        7 & directed   & (b) $\rightarrow$ (d) $\rightarrow$ (a) $\rightarrow$ (c) & $(2.8,2.9,0.5,1.8)$ \\
        \hline
    \end{tabular}
\end{table}

\subsection{Comparison with existing centrality and between LTI and LTV systems in a directed network}
\label{subsec:directed_comparison}

In this subsection, we consider network 1 and present a comparison of controllability scores with existing centralities, the control energy centralities~\cite{Summers2016}.
There are three types of control energy centralities: the volumetric control energy (VCE) centrality, the average control energy (ACE) centrality, and the average controllability (AC) centrality.
They were originally proposed for LTI systems~\cref{eq:lti_base} but can be directly extended to LTV systems~\cref{eq:ltv_base}.
For each node $i$, they are defined in terms of the controllability Gramians~\cref{eq:ltv_diagonal_gramian_base} as follows:
\begin{align*}
    C_{\mathrm{VCE}}(i)&\coloneq\log\prod_{j=1}^{\mathrm{rank}\,W_i}\lambda_j(W_i), \\
    C_{\mathrm{ACE}}(i)&\coloneq-\mathrm{tr}\left(W_i^{\dagger}\right), \\
    C_{\mathrm{AC}}(i)&\coloneq\mathrm{tr}\left(W_i\right),
\end{align*}
where $\lambda_j(W_i)$ denotes the $j$-th largest eigenvalue and $W_i^{\dagger}$ denotes the pseudoinverse of $W_i$.
In all of these, a larger value indicates greater importance of the node.
We also compare them for the LTI and the LTV systems.

Network 1 has a hierarchical structure, and node 7 is an upstream node; therefore, it is expected to have high importance.
In contrast, nodes 5, 6, and 8 are downstream nodes and are thus expected to have low importance.

\begin{table}[htbp]
    \centering
    \caption{Comparison between controllability scores and control energy centralities}
    \label{tab:directed_comparison}
    \begin{subtable}[t]{1.0\linewidth}
        \centering
        \caption{Centralities for temporal network 1}
        \label{tab:temporal_directed}
        \begin{tabular}{c|ccccc}
            \hline
            Node & VCS & AECS & VCE & ACE & AC \\ \hline\hline
            1 & 0.058       & 0.154       & 2.371       & -1.458       & 15.614 \\
            2 & 0.142       & 0.105       & 0.628       & -7.357       & 9.361 \\ 
            3 & 0.150       & 0.154       & 1.551       & -1.111       & 5.243 \\
            4 & 0.107       & 0.136       & 1.804       & -1.767       & 10.729 \\
            5 & 0.000       & 0.000       & 0.875       &\textbf{-0.417}& 2.398 \\
            6 & 0.000       & 0.000       & 0.875       &\textbf{-0.417}& 2.398 \\
            7 &\textbf{0.341}&\textbf{0.232}& 0.498     & -49.568      & \textbf{92.147} \\
            8 & 0.000       & 0.000       & 0.875       &\textbf{-0.417}& 2.398 \\
            9 & 0.167       & 0.115       & \textbf{3.904}& -0.517     & 25.635 \\
            10& 0.034       & 0.105       & 1.551       & -1.111       & 5.243 \\ \hline
        \end{tabular}
    \end{subtable}

    \begin{subtable}[t]{1.0\linewidth}
        \centering
        \caption{Centralities for aggregated network 1}
        \label{tab:aggregated_directed}
        \begin{tabular}{c|ccccc}
            \hline
            Node & VCS & AECS & VCE & ACE & AC \\ \hline\hline
            1 & 0.077       & 0.168       &\textbf{1.591}& -1.475       & 7.243 \\
            2 & 0.165       & 0.115       & 1.022       & -8.399       & 15.277 \\ 
            3 & 0.163       &\textbf{0.177}&\textbf{1.591}& -1.475       & 7.243 \\
            4 & 0.117       & 0.117       &\textbf{1.591}& -1.475       & 7.243 \\
            5 & 0.000       & 0.000       & 0.875       &\textbf{-0.417}& 2.398 \\
            6 & 0.000       & 0.000       & 0.875       &\textbf{-0.417}& 2.398 \\
            7 &\textbf{0.249}& 0.165      & 0.778       & -62.563      &\textbf{78.507} \\
            8 & 0.000       & 0.000       & 0.875       &\textbf{-0.417}& 2.398 \\
            9 & 0.192       & 0.120       & 1.022       & -8.399       & 15.277 \\
            10& 0.036       & 0.139       &\textbf{1.591}& -1.475       & 7.243 \\ \hline
        \end{tabular}
    \end{subtable}
\end{table}

\Cref{tab:directed_comparison} shows the controllability scores and the control energy centralities for temporal network 1 (\cref{tab:temporal_directed}) and for aggregated network 1 (\cref{tab:aggregated_directed}).
First, we examine the results for the temporal network.
The VCS and the AECS yield results consistent with the expectation: they assign the highest importance to node 7 and assign no importance to nodes 5, 6, and 8.
The AC produces a similar outcome, giving node 7 the highest importance while ranking nodes 5, 6, and 8 at the bottom.
By contrast, the VCE and the ACE exhibit a different pattern, placing node 7 at the lowest rank while ACE assesses nodes 5, 6, and 8 as most important.
Although the result may seem unusual, it is because the definition of the VCE and the ACE does not necessarily reflect controllability, as explained in \cite{SatoTerasaki2024}.

Next, we examine the results for the aggregated network.
The overall tendency is similar to that for the temporal network.
A noteworthy difference, however, is that while the AECS assigned the highest importance to node 7 in the temporal network, it ranks node 3 highest in the aggregated network.
In other words, increasing the temporal resolution and considering the chronological order of snapshots might change the most important node in the AECS.
Therefore, for networks with time-varying structures, approximating them using an LTI system on the aggregated network is insufficient for understanding their dynamics.
Instead, modeling them as an LTV system on a temporal network is considered to allow for a more precise understanding.

The VCS tends to assign higher scores to upstream nodes than the AECS.
Since upstream nodes in hierarchical networks are obviously important, the AECS is expected to reveal critical nodes whose significance is not apparent solely from the network topology, as explained in \cite{SatoTerasaki2024}.

\subsection{Comparison with existing centrality and between LTI and LTV systems in an undirected network}
\label{subsec:undirected_comparison}

In this subsection, we consider network 2 and present a comparison of the controllability scores with the GCSs~\cite{Mo2025}.
Unlike network 1, network 2 becomes an undirected network by adding edges in the opposite direction.
As a result, it does not possess a simple hierarchical structure, making it difficult to infer node importance solely from the network structure.

\Cref{tab:vcs_undirected} shows the VCS and the generalized volumetric controllability score (GVCS) for network 2, and \cref{tab:aecs_undirected} shows the AECS and the generalized average energy controllability score (GAECS) for network 2.
Here, temp. means the values for temporal network 2, while agg. means the values for aggregated network 2.
Since the GVCS or the GAECS computes the VCS or the AECS for each snapshot, \cref{tab:vcs_undirected,tab:aecs_undirected} presents the VCS and the AECS corresponding to snapshots (a), (b), (c), and (d) in \cref{fig:temporal}.
Furthermore, for the aggregated network, the GVCS and the GAECS coincide with the VCS and the AECS, respectively.

\begin{table}[htbp]
    \centering
    \caption{Comparison between controllability scores and GCSs~\cite{Mo2025}}
    \label{tab:undirected_comparison}
    \begin{subtable}[t]{1.0\linewidth}
        \centering
        \caption{VCS and GVCS for network 2}
        \label{tab:vcs_undirected}
        \begin{tabular}{c|c|cccc|c}
            \hline
            \multirow{2}{*}{Node} & VCS & \multicolumn{4}{c|}{GVCS} & VCS \\
            & temp. & (a) & (b) & (c) & (d) & agg. \\ \hline\hline
            1 &\textbf{0.150}& 0.100      & 0.100       & 0.100       & 0.100       & 0.100 \\
            2 & 0.111       & 0.100       & 0.100       & 0.100       & 0.100       & 0.100 \\
            3 & 0.107       & 0.100       & 0.100       & 0.100       & 0.100       & 0.100 \\
            4 & 0.079       & 0.100       & 0.100       & 0.100       & 0.100       & 0.100 \\
            5 & 0.000       & 0.100       & 0.100       & 0.100       & 0.100       & 0.100 \\
            6 & 0.108       & 0.100       & 0.100       & 0.100       & 0.100       & 0.100 \\
            7 & 0.111       & 0.100       & 0.100       & 0.100       & 0.100       & 0.100 \\
            8 & 0.094       & 0.100       & 0.100       & 0.100       & 0.100       & 0.100 \\
            9 &\textbf{0.136}& 0.100      & 0.100       & 0.100       & 0.100       & 0.100 \\
            10& 0.103       & 0.100       & 0.100       & 0.100       & 0.100       & 0.100 \\ \hline
        \end{tabular}
    \end{subtable}

    \begin{subtable}[t]{1.0\linewidth}
        \centering
        \caption{AECS and GAECS for network 2}
        \label{tab:aecs_undirected}
        \begin{tabular}{c|c|cccc|c}
            \hline
            \multirow{2}{*}{Node} & AECS & \multicolumn{4}{c|}{GAECS} & AECS \\
            & temp. & (a) & (b) & (c) & (d) & agg. \\ \hline\hline
            1 &\textbf{0.244}& 0.100      &\textbf{0.143}& 0.100      &\textbf{0.143}&\textbf{0.145} \\
            2 & 0.064       & 0.108       & 0.084       & 0.108       & 0.084       & 0.080 \\
            3 & 0.056       &\textbf{0.128}& 0.084      &\textbf{0.128}& 0.084      & 0.120 \\
            4 &\textbf{0.221}& 0.077      &\textbf{0.143}& 0.077      &\textbf{0.143}& 0.080 \\
            5 & 0.056       & 0.077       & 0.084       & 0.100       & 0.093       & 0.071 \\
            6 & 0.075       & 0.086       & 0.093       & 0.086       & 0.093       & 0.111 \\
            7 & 0.103       & 0.108       & 0.109       & 0.108       & 0.100       &\textbf{0.141} \\
            8 & 0.078       & 0.086       & 0.084       & 0.086       & 0.084       & 0.071 \\
            9 & 0.052       & 0.100       & 0.093       & 0.077       & 0.084       & 0.071 \\
            10& 0.052       &\textbf{0.128}& 0.084      &\textbf{0.128}& 0.084      & 0.111 \\ \hline
        \end{tabular}
    \end{subtable}
\end{table}

First, we examine the results for the VCS and the GVCS.
A remarkable point is that the GVCS assigns uniform evaluations to all nodes in each snapshot.
Similarly, the VCS also provides uniform scores to all nodes for the aggregated network.
In contrast, the VCS for the temporal network yields nontrivial results.

This result can be explained by Theorem 2 in \cite{SatoKawamura2025}.
Specifically, when the system matrix $A$ of an LTI system~\cref{eq:lti_base} is symmetric, the VCS gives uniform scores to all nodes.
Since the system matrix $A$ defined by an undirected graph is symmetric, the GVCS evaluates all the nodes uniformly in each snapshot.
Similarly, the aggregated network yields an LTI system over the entire time interval whose system matrix $A$ is symmetric, and hence the VCS also assigns uniform scores to all nodes.
In contrast, the temporal network lies outside the scope of Theorem 2 in \cite{SatoKawamura2025}; therefore, even if the system matrix $A$ is symmetric in each snapshot, the VCS may produce nontrivial results.
Indeed, for temporal network 2, the VCS produces a nontrivial result.
While the VCS fails to serve as a meaningful measure for undirected networks, the findings suggest that, in the case of LTV networks, the VCS can indeed produce informative values.

Next, we examine the results for the AECS and the GAECS.
The same observations as \cref{subsec:directed_comparison} apply to the difference between the AECS for the temporal and aggregated networks.
Specifically, while node 4 is ranked as the second most important in the temporal network, node 7 holds this position in the aggregated network.
This demonstrates that increasing temporal resolution and incorporating chronological order can change the evaluation given by the AECS.
Therefore, for networks with time-varying structures, it is more accurate to model them as LTV systems on temporal networks rather than as LTI systems on aggregated networks.

For snapshots (a) and (c), the GAECS assigns high scores to nodes 3 and 10.
In contrast, the AECS on the temporal network gives these nodes low scores, showing that incorporating the chronological order of snapshots can significantly alter the evaluation.
Furthermore, since nodes 3 and 10 are perfectly symmetric in each individual snapshot, the GAECS assigns them identical scores.
However, once the chronological order is considered, they are no longer symmetric, and the AECS assigns different scores.
This suggests that the AECS, applied to temporal networks, provides a more appropriate assessment of node importance.

For both the VCS and the AECS, the relationship between the network structure and the score remains an open problem even in the case of LTI systems.
Investigating this issue is left for future work.

\subsection{Dependence of controllability scores on time parameters and chronological order of snapshots}
\label{subsec:dependence}

In this subsection, we consider networks 1, 3, 4, 5, 6, and 7.
We examine how the scores change when the time parameters or the chronological order of the snapshots are modified.
All of these networks have hierarchical structures, and as discussed in \cref{subsec:directed_comparison}, node 7 is expected to have high importance, whereas nodes 5, 6, and 8 are expected to have low importance.

\Cref{tab:vcs_dependence} shows the VCS for each network, and \cref{tab:aecs_dependence} shows the AECS.
Temporal network 3 is obtained from temporal network 1 by making a small perturbation of the time parameters, while temporal network 4 is obtained by a larger perturbation.
As established in \cref{prop:continuity}, the controllability scores vary continuously with respect to the time parameters.
Accordingly, the difference between the controllability scores of temporal networks 1 and 3 is expected to be relatively small, while that between networks 1 and 4 might be relatively large, which is indeed confirmed by the results.
A similar relation holds for temporal networks 5, 6, and 7, and the results are consistent with this expectation.

On the other hand, when the chronological order of the snapshots is altered, the controllability score can change significantly.
In fact, in temporal networks 1, 3, and 4, node 9 is assigned the second-highest VCS value, whereas in temporal networks 5, 6, and 7, which are obtained by permuting the snapshot order together with the corresponding time parameters, node 2 becomes the second most important in terms of the VCS.
This suggests that reordering the snapshots can substantially alter the dynamics and that the controllability scores are capable of capturing such changes, whereas, as noted in \cref{subsec:difference}, the GCSs~\cite{Mo2025} merely permute the scores and do not necessarily detect the change in dynamics.

\begin{table}[htbp]
    \centering
    \caption{Dependence of controllability scores on time parameters and chronological order of snapshots}
    \label{tab:dependence}
    \begin{subtable}[t]{1.0\linewidth}
        \centering
        \caption{VCS for networks 1, 3, 4, 5, 6, 7}
        \label{tab:vcs_dependence}
        \begin{tabular}{c|cccccc}
            \hline
            \multirow{2}{*}{Node} & \multicolumn{6}{c}{Network} \\
            & 1 & 3 & 4 & 5 & 6 & 7 \\ \hline\hline
            1 & 0.058       & 0.064       & 0.064       & 0.102       & 0.110       & 0.120 \\
            2 & 0.142       & 0.145       & 0.124       &\textbf{0.185}&\textbf{0.191}&\textbf{0.159} \\
            3 & 0.150       & 0.149       & 0.140       & 0.118       & 0.115       & 0.157 \\
            4 & 0.107       & 0.098       & 0.112       & 0.094       & 0.090       & 0.102 \\
            5 & 0.000       & 0.000       & 0.000       & 0.000       & 0.000       & 0.000 \\
            6 & 0.000       & 0.000       & 0.000       & 0.000       & 0.000       & 0.000 \\
            7 &\textbf{0.341}&\textbf{0.346}&\textbf{0.319}&\textbf{0.335}&\textbf{0.334}&\textbf{0.324} \\
            8 & 0.000       & 0.000       & 0.032       & 0.000       & 0.000       & 0.046 \\
            9 &\textbf{0.167}&\textbf{0.165}&\textbf{0.154}& 0.161       & 0.157       & 0.138 \\
            10& 0.034       & 0.034       & 0.055       & 0.003       & 0.003       & 0.000 \\ \hline
        \end{tabular}
    \end{subtable}

    \begin{subtable}[t]{1.0\linewidth}
        \centering
        \caption{AECS for networks 1, 3, 4, 5, 6, 7}
        \label{tab:aecs_dependence}
        \begin{tabular}{c|cccccc}
            \hline
            \multirow{2}{*}{Node} & \multicolumn{6}{c}{Network} \\
            & 1 & 3 & 4 & 5 & 6 & 7 \\ \hline\hline
            1 &\textbf{0.154} &\textbf{0.159}& 0.133    &\textbf{0.172}&\textbf{0.176}&\textbf{0.155} \\
            2 & 0.105       & 0.103       & 0.111       & 0.135       & 0.136       & 0.144 \\
            3 &\textbf{0.154}& 0.153       & 0.123      & 0.164       & 0.162       & 0.154 \\
            4 & 0.136       & 0.129       &\textbf{0.156}       & 0.086       & 0.085       & 0.101 \\
            5 & 0.000       & 0.000       & 0.000       & 0.000       & 0.000       & 0.000 \\
            6 & 0.000       & 0.000       & 0.000       & 0.000       & 0.000       & 0.000 \\
            7 &\textbf{0.232}&\textbf{0.229}&\textbf{0.226}&\textbf{0.196}&\textbf{0.196}&\textbf{0.193} \\
            8 & 0.000       & 0.000       & 0.058       & 0.000       & 0.000       & 0.064 \\
            9 & 0.115       & 0.114       & 0.114       & 0.107       & 0.106       & 0.105 \\
            10& 0.105       & 0.113       & 0.079       & 0.140       & 0.140       & 0.085 \\ \hline
        \end{tabular}
    \end{subtable}
\end{table}

\subsection{Performance of data-driven method}
\label{subsec:performance}

In this subsection, we examine the performance of the proposed data-driven method to compute the controllability Gramians~\cref{eq:ltv_diagonal_gramian_base}, especially the relation between accuracy and the number of observed data.
The number of the nodes is $n=10$.
We vary the number of observed data, $N=7, 8, 9, 10, 11, 12$, to examine how the accuracy changes.
In particular, the cases $N=7,8,9$ do not satisfy \cref{assump:data-driven} and thus are expected to fail to produce correct results; nevertheless, they are included to assess performance when the number of observations is small.

We generated the observed data $x^{(k)}(t)$ as follows:
\begin{description}
    \item [Step 1.] The initial state $x^{(k)}(0)$ is randomly generated by the uniform distribution over the unit sphere, i.e., $\norm{x^{(k)}(0)}=1$.
    \item [Step 2.] The state trajectory $x^{(k)}(t)$ is obtained accurately.
    \item [Step 3.] The state trajectory $x^{(k)}(\ell\Delta\tau) \ (\ell=1,\ldots,T/\Delta\tau)$ is observed with the sampling period $\Delta\tau=10^{-3}$.
\end{description}
By Step 1., \cref{assump:data-driven} is satisfied with probability $1$ in the case $N=10,11,12$.
By using observed data, we computed the controllability Gramians.

\begin{figure}[htbp]
    \centering
    \begin{tikzpicture}
        \begin{axis}[
            boxplot/draw direction=y,
            ymode=log,
            every boxplot/.style={
                draw=black,
                fill=none,
            },
            every boxplot median/.style={draw=black, very thick},
            every boxplot whisker/.style={draw=black},
            every boxplot box/.style={draw=black},
            ylabel={Relative error},
            xlabel={The number of the observed data},
            xtick={1,2,3,4,5,6},
            xticklabels={$7$,$8$,$9$,$10$,$11$,$12$}
            ]
            
            \addplot[boxplot] table[y index=0] {gramian_error.dat};
            \addplot[boxplot] table[y index=1] {gramian_error.dat};
            \addplot[boxplot] table[y index=2] {gramian_error.dat};
            \addplot[boxplot] table[y index=3] {gramian_error.dat};
            \addplot[boxplot] table[y index=4] {gramian_error.dat};
            \addplot[boxplot] table[y index=5] {gramian_error.dat};
        \end{axis}
    \end{tikzpicture}
    \caption{The error of the proposed data-driven method}
    \label{fig:accuracy}
\end{figure}

We conducted the experiment 100 times for each value of $N$.
In each experiment, the error was measured as the maximum relative error:
\begin{equation*}
    \max_{i=1,\ldots,n}\dfrac{\norm{\widetilde{W_i}-W_i}}{\norm{W_i}},
\end{equation*}
where $W_i$ is the exact controllability Gramian~\cref{eq:ltv_diagonal_gramian_base}, and $\widetilde{W_i}$ is the computed one by \cref{alg:W_approximation}.
The result is summarized in the boxplot shown in \cref{fig:accuracy}.
When $N<n$, \cref{assump:data-driven} is violated, leading to significant errors; however, when $N\geq n$, the computation achieves high accuracy.

\section{Concluding remarks}
\label{sec:conclusion}

\subsection{Summary}

We have extended the controllability score to apply to LTV systems, which include dynamical systems on temporal networks.
We have also proved the uniqueness of the controllability score for almost all time parameter values and its continuity with respect to the time parameters.
From the result, we consider that the controllability score is unique in most practical cases.
Furthermore, we have proposed a data-driven method to compute controllability scores for practical use and compared it with model-based methods.

In the numerical experiments, we demonstrated that the controllability score may yield different results for temporal and aggregated networks.
Consequently, when studying systems that are more naturally modeled as LTV systems rather than LTI systems, the proposed extension is essentially significant.

We also compared the controllability score with the existing centralities, namely the control energy centrality~\cite{Summers2016} and the GCS~\cite{Mo2025}.
Whereas the VCE and the ACE contradicted the importance expected from the network structure, the controllability score and the AC provided scores consistent with the structure.
Moreover, the GCS essentially ignores the effect of the chronological order of snapshots on the dynamics, whereas the controllability score can capture this effect.
In addition, while the GVCS produces meaningless scores for undirected temporal networks, the VCS can, in some cases, yield meaningful ones.

Finally, we evaluated the performance of the proposed data-driven method.
The results show that the accuracy is very poor when \cref{assump:data-driven} is violated, but once the assumption is satisfied, the method achieves highly accurate computations.
Hence, the proposed method is expected to allow us to assess network centrality using experimental data rather than knowledge of the system matrix.

\subsection{Relationship between network structure and controllability scores: open question}

In \cref{subsec:directed_comparison}, we observed that in hierarchical networks, the controllability scores tend to assign higher scores to upstream nodes than downstream nodes.
Nevertheless, in general, the relationship between network structure and the controllability score has not yet been systematically understood, even in the LTI setting.
Clarifying this relationship and extending the analysis to LTV systems are left for future work.

\subsection{Application to real data}

While the controllability score has been applied to brain networks in \cite{SatoKawamura2025}, its applications to real data are still limited.
Applying the controllability score to large-scale real systems and analyzing such systems from the viewpoint of the controllability score to elucidate their network functions are left for future work.

\appendices

\section{Proof of \cref{prop:analyticity}}
\label{sec:proof_analyticity}

The following proposition is a well-known result on ordinary differential equations~\cite{Chicone2024}.
\begin{proposition}\label{prop:transition_regularity}
    Assume that $A'(t)$ is real analytic on $(0,s)$ and continuous on $[0,s]$.
    Then, the corresponding state transition matrix $\Phi'(t,\tau)$ is real analytic with respect to $t$ on $(0,s)$ for each fixed $\tau$.
\end{proposition}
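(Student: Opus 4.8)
The plan is to prove real analyticity locally at each interior point and then conclude globally. Fix $\tau$ and observe first that, since $A'(t)$ is continuous on $[0,s]$, the linear matrix ODE $\partial_t\Phi'(t,\tau)=A'(t)\Phi'(t,\tau)$ with $\Phi'(\tau,\tau)=I$ admits a unique solution defined on all of $[0,s]$; no finite-time escape occurs because the right-hand side is linear in $\Phi'$. Real analyticity being a local property, it suffices to show that $t\mapsto\Phi'(t,\tau)$ is real analytic in a neighborhood of each $t_0\in(0,s)$. The key structural observation is that the map $(t,\Phi)\mapsto A'(t)\Phi$ is jointly real analytic on $(0,s)\times\RR^{n\times n}$, being the product of the real-analytic factor $A'(t)$ and the (polynomial, hence analytic) factor $\Phi$.

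First I would complexify. Because $A'$ is real analytic on $(0,s)$, for each $t_0\in(0,s)$ its Taylor series at $t_0$ converges on some disk and thus extends $A'$ to a holomorphic matrix-valued function $\widetilde{A}(z)$ on $\{z\in\CC\mid |z-t_0|<\rho\}$ for some $\rho>0$. I would then consider the holomorphic initial value problem $w'(z)=\widetilde{A}(z)w(z)$ with $w(t_0)=\Phi'(t_0,\tau)$ and invoke the existence theorem for analytic ordinary differential equations (proved by the method of majorants), which yields a unique holomorphic solution $w(z)$ on a possibly smaller disk $\{z\mid |z-t_0|<r\}$. Concretely, writing $\widetilde{A}(z)=\sum_{k\ge0}A_k(z-t_0)^k$ with $\norm{A_k}\le C\rho^{-k}$ and seeking $w(z)=\sum_{k\ge0}w_k(z-t_0)^k$, the ODE forces the recursion $(k+1)w_{k+1}=\sum_{j=0}^{k}A_jw_{k-j}$; comparing inductively with the explicit Taylor coefficients of the scalar majorant $\mu(z)=\norm{w_0}(1-(z-t_0)/\rho)^{-C\rho}$ gives $\norm{w_k}\le Mr^{-k}$ for any $r<\rho$ and a suitable $M$, certifying a positive radius of convergence.

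Finally I would match the two solutions. Restricting $w$ to the real segment $(t_0-r,t_0+r)$ produces a function satisfying the real ODE $\dot y=A'(t)y$ with $y(t_0)=\Phi'(t_0,\tau)$; by uniqueness of solutions of this real linear ODE it coincides with $t\mapsto\Phi'(t,\tau)$ there. Hence $\Phi'(\cdot,\tau)$ is, near $t_0$, the restriction of a holomorphic function and is therefore real analytic at $t_0$. Since $t_0\in(0,s)$ was arbitrary, $\Phi'(t,\tau)$ is real analytic on $(0,s)$, as claimed. The main obstacle is the analytic existence step, i.e., establishing the majorant estimate $\norm{w_k}\le Mr^{-k}$ that certifies a positive radius of convergence; the remaining pieces are the standard global existence and uniqueness theorem for linear ODEs and routine bookkeeping. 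This is precisely the content of the cited result~\cite{Chicone2024}, so the argument may alternatively be closed by direct appeal to it.
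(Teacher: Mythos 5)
Your proof is correct, but note that the paper itself offers no argument for this proposition at all: it is stated as a well-known fact and disposed of by the bare citation to~\cite{Chicone2024}. What you have written out is, in effect, the classical textbook proof sitting behind that citation --- complexify $A'$ near each $t_0\in(0,s)$ via its Taylor expansion, solve the holomorphic initial value problem by the method of majorants, and identify the restriction of the holomorphic solution to the real axis with $\Phi'(\cdot,\tau)$ by uniqueness for the real linear ODE. The pieces all check out: the recursion $(k+1)w_{k+1}=\sum_{j=0}^{k}A_jw_{k-j}$ is what the power-series ansatz forces, the scalar majorant $\norm{w_0}\,(1-(z-t_0)/\rho)^{-C\rho}$ does dominate the coefficients by induction, and the final matching step is sound because uniqueness holds for the complexified system $\dot y=A'(t)y$ on $\CC^{n\times n}$ even without knowing a priori that $w$ is real on the real segment. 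Two minor points of hygiene, neither a gap: the Cauchy estimate $\norm{A_k}\le C\rho^{-k}$ strictly requires working on a slightly smaller disk than the one on which $\widetilde A$ is holomorphic, and $\rho$ must be taken small enough that the Taylor series actually equals $A'$ on the real interval $(t_0-\rho,t_0+\rho)$ (real analyticity guarantees this only on \emph{some} neighborhood, which may be smaller than the disk of convergence). What your version buys over the paper's treatment is self-containedness --- the quantitative majorant estimate makes the radius of analyticity explicit --- at the cost of about a page of bookkeeping that the paper, reasonably for its purposes, delegates to the reference.
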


\begin{proof}[\cref{prop:analyticity}]
    By definition, we obtain
    \begin{align*}
        R(\Delta t)_{ij}&=\int_{0}^{t_m} \left\{e_i^\top\Phi(t_m,\tau;\Delta t)e_j\right\}^2 \rd\tau \\
        &=\sum_{k=1}^{m} \int_{t_{k-1}}^{t_{k}} \left\{e_i^\top\Phi(t_m,\tau;\Delta t)e_j\right\}^2 \rd\tau \\
        &=\sum_{k=1}^{m} \int_{0}^{\Delta t_k} \left\{e_i^\top\Phi(t_m,\tau+t_{k-1};\Delta t)e_j\right\}^2 \rd\tau.
    \end{align*}
    It follows from \cref{eq:transition} that
    \begin{equation*}
        \Phi(t_m,\tau+t_{k-1};\Delta t)=\Phi_m(\Delta t_m,0)\cdots\Phi_k(\Delta t_k,0)\Phi_k(\tau,0)^{-1}
    \end{equation*}
    for $0\leq \tau\leq\Delta t_k \ (k=1,\ldots,m)$.
    From \cref{prop:transition_regularity}, $\Phi_k(\Delta t_k,0)$ is real analytic with respect to $\Delta t_k$ on $(0,s_k)$; hence $\Phi_m(\Delta t_m,0)\cdots\Phi_k(\Delta t_k,0)$ is real analytic with respect to $\Delta t$ on $\mathrm{int}\,D$, and $\Phi_k(\tau,0)^{-1}$ is real analytic with respect to $\tau$ on $(0,\Delta t_k)$.
    Thus, we can represent the integrand as
    \begin{equation*}
        \left\{e_i^\top\Phi(t_m,\tau+t_{k-1};\Delta t)e_j\right\}^2=\sum_{\ell=1}^{L_k}f^{(k)}_{\ell}(\Delta t)g^{(k)}_{\ell}(\tau),
    \end{equation*}
    where $f^{(k)}_{\ell}(\Delta t)$ is real analytic with respect to $\Delta t$ on $\mathrm{int}\,D$, $g^{(k)}_{\ell}(\tau)$ is real analytic with respect to $\tau$ on $(0,s_k)$, and $L_k$ is the number of terms.
    
    Therefore, it follows that
    \begin{equation*}
        R(\Delta t)_{ij}=\sum_{k=1}^{m}\sum_{\ell=1}^{L_k} f_{\ell}^{(k)}(\Delta t)\int_{0}^{\Delta t_k}g_{\ell}^{(k)}(\tau)\rd\tau
    \end{equation*}
    holds.
    Since $\int_{0}^{\Delta t_k}g_{\ell}^{(k)}(\tau)\rd\tau$ is real analytic with respect to $\Delta t_k$, it is therefore real analytic with respect to $\Delta t$ as well, which completes the proof. \qed
\end{proof}

\section*{Acknowledgment}
This work was supported by the Japan Society for the Promotion of Science KAKENHI under Grant 23K03899. 

\bibliographystyle{IEEEtran}
\bibliography{reference.bib}

\end{document}